\newtheorem{thm}{Theorem}
\numberwithin{thm}{section}
\newtheorem{lemma}[thm]{Lemma}
\newtheorem{cor}[thm]{Corollary}
\newtheorem{rem}[thm]{Remark}
\newtheorem{example}[thm]{Example}
\newcommand{\neweq}[1]{\begin{equation}\label{#1}}
\def\phi{\varphi}
\def\incep{\left\{\begin{array}{cl} }
 \def\termin{\end{array}\right. }
\def\2af{2^*_\alpha}
\title[Spectral picture of Toeplitz operators]{\textbf{The spectral picture of Bergman-Toeplitz operators with harmonic polynomial symbols}}
\author{Kunyu Guo}
\address{School of Mathematical Sciences, Fudan University, Shanghai, 200433, PR \ China.}
\email{kyguo@fudan.edu.cn}
\author{Xianfeng  Zhao}
\address{College of Mathematics and Statistics, Chongqing University,  Chongqing, 401331,  PR \  China.}
\email{xianfengzhao@cqu.edu.cn}
\author{Dechao Zheng}
\address{Center of  Mathematics,  Chongqing  University,  Chongqing,   401331, PR \ China   and
Department of   Mathematics, Vanderbilt  University,   Nashville, TN  37240, United  States.}
\email{dechao.zheng@vanderbilt.edu}
\keywords{Bergman space; Toeplitz operator; harmonic polynomial; spectrum}
\thanks{\emph{2010 Mathematics Subject Classification}: primary 47B35; secondary 47B38, 47A10}
\begin{document}

\maketitle
\begin{center}{}\end{center}

\begin{abstract}
This paper shows some new phenomenon in the spectral theory of Toeplitz operators on the Bergman space, which is considerably different from that of Toeplitz operators on the Hardy space. On the one hand, we prove that the spectrum of the Toeplitz operator with symbol ${\overline{z}+p}$ is always connected for every polynomial $p$ with degree less than $3$. On the other hand, we show that for each integer $k$ greater than $2$, there exists a  polynomial $p$ of degree $k$ such that the spectrum of the  Toeplitz operator with symbol ${\overline{z}+p}$ has at least one isolated point but has at most finitely many isolated points. Then these results are applied to obtain a new class of non-hyponormal Toeplitz operators with bounded harmonic symbols on the Bergman space for which Weyl's theorem holds.
\end{abstract}

\tableofcontents

\section{Introduction}

Let $dA$ denote the Lebesgue  measure on the open unit disk $\mathbb D$ in the complex plane $\mathbb C$, normalized so that the measure of the disk
$\mathbb D$ is $1$. The complex space $L^2(\mathbb D, dA)$ is a Hilbert space with the inner product
$$\langle f, g\rangle=\int_{\mathbb D} f(z)\overline{g(z)}dA(z).$$
The Bergman space $L_a^2$ is the set of those functions in $L^2(\mathbb D, dA)$ that are analytic on $\mathbb D$. Thus the Bergman space is a closed subspace of $L^2(\mathbb D, dA)$ and so there is an orthogonal projection $P$ from $L^2(\mathbb D, dA)$ onto $L_a^2$.
For $\varphi\in L^\infty(\mathbb D, dA)$, the  Toeplitz operator $T_{\varphi}$ with symbol $\varphi$ on the Bergman space (or ``Bergman-Toeplitz operator") is  defined by $$T_\varphi f=P(\varphi f)$$
for $f$ in the Bergman space $L_a^2$.

In general, the behaviour of these operators may be quite different from that of the Toeplitz operators on the Hardy space. For example, there are many nontrivial compact Toeplitz operators on the Bergman space \cite{Ax, AxZ, Str1, Zhu1}. But there is no a nontrivial compact Toeplitz operator on the Hardy space \cite{Dou}.   On the other hand, there is no a nontrivial compact Toeplitz operator with a bounded harmonic symbol on the Bergman space \cite{AxZ} and the harmonic extension gives a natural corresponding relationship between the bounded functions on the unit circle and the bounded harmonic functions on the unit disk \cite{Dur}. Thus  Toeplitz operators on the Bergman space with harmonic symbols behave quite similarly to those on the Hardy space \cite{McS}. As a fundamental problem concerning Toeplitz operators is to determine the spectra in terms of the properties of their symbols, it is natural to study the spectra of  Toeplitz operators with bounded harmonic symbols on the Bergman space.

An important result about Toeplitz operators on the Hardy space is the Widom theorem  \cite{Widom1, Widom2}, which states that the spectrum of a bounded Toeplitz operator is always connected. Moreover, by means of some techniques in complex analysis and differential equation, Douglas showed that the essential spectrum of a bounded Toeplitz operator on the Hardy space is also connected \cite[Theorem 7.45]{Dou}.  On the one hand, as there are many nontrivial compact Toeplitz operators on the Bergman space,  one can easily construct a Toeplitz operator whose spectrum has isolated points and hence is disconnected. On the other hand, McDonald and Sundberg showed in  \cite{McS} that the spectrum and essential spectrum of $T_\varphi$ on the Bergman space are both  connected for $\varphi$ a bounded and real-valued harmonic function on $\mathbb D$. Moreover, they also showed that the essential spectrum of $T_\varphi$ is connected if $\varphi$ is harmonic on $\mathbb D$ and  piecewise
continuous on the boundary of the disk $\mathbb D$. These suggested the conjecture that a Toeplitz operator on the Bergman space with harmonic symbol has a connected spectrum \cite[Page 320]{Nik}. In 1979,  McDonald and Sundberg asked the question in \cite{McS} whether the essential spectrum of a  Toeplitz operator with bounded harmonic symbol is connected. About 30 years later, Sundberg and the third author gave a negative  answer to the above question and disproved the above conjecture  in \cite{SZ}. Indeed, they first constructed a rational function $q$ on $\mathbb D$ via two conformal mappings and showed that
the spectrum of the Toeplitz operator $T_{\overline{z}+q}$ has at least one isolated point and hence  is disconnected.  Based on a characterization of the essential spectrum for a certain Toeplitz algebra established in \cite{Sua} and some techniques in function algebra theory, the authors used the function $q$ quoted above to construct a bounded harmonic function $h$ such that the essential spectrum of the Toeplitz operator $T_h$  also has an isolated point \cite{SZ}. Despite considerable effort devoted to studying more about the invertibility and spectra of Bergman-Toeplitz operators with harmonic symbols, little progress has been made on this topic in recent 10 years.

 An especially important but quite difficult problem in operator theory is to determine the spectrum of a bounded linear operator. Note that the spectral structure is closely related to the invariant subspaces of  bounded linear operators. Indeed, for each bounded linear operator $T$ with disconnected spectrum, we can apply  the Riesz decomposition theorem to construct a hyperinvariant subspace of $T$, see \cite[Lemma 1.19]{Bay} or Proposition 4.11 of Chapter VII in \cite{Con}. However, there is little characterization for the topological structure of the spectrum of the Toeplitz operator with a bounded harmonic symbol, even if the symbol is the harmonic function ${\overline{z}+p}$ for  an analytic polynomial $p$. In this paper,  we will investigate the structure of the spectrum of the Toeplitz operator $T_{\overline{z}+p}$ via certain analytic properties of polynomials. The main idea is to show how  eigenvalues of  $T_{\overline{z}+p}$ depend on $p$ by solving the first order complex differential equation for eigenvectors. But nontrivial solutions of the differential equation in the Bergman space  will be subject to zeros in the unit disk of some analytic polynomials. In order to estimate the modulus of these complex zeros, our approach here is to use the theorem of the zeros of a polynomial depend continuously on its coefficients \cite{Ost,RS}.

For a function $\varphi$ bounded and analytic on the unit disk $\mathbb D$, it is well-known that the spectrum of the Toeplitz operator $T_{\varphi}$ equals the closure of the image of the unit disk under $\varphi$. In \cite{ZZ}, it was shown that if the symbol $\varphi$ is an  affine function of $z$ and $\overline{z}$, then it is also true that the spectrum of $T_{\varphi}$ equals the closure of the image of $\mathbb D$ under $\varphi$, and hence  it is a connected set. In this paper, we obtain  a characterization on the point spectra of  Toeplitz operators with certain bounded harmonic symbols, and then establish a necessary and sufficient condition for this class of Toeplitz operators to be invertible on the Bergman space, see Theorems \ref{key lemma} and \ref{invertibility} in the next section. On the one hand,  we prove in Theorem \ref{(z)+az2+bz+c} that the spectrum of the Bergman  Toeplitz operator $T_{\overline{z}+p}$ is connected for every polynomial $p$ with degree less than or equal to $2$.  On the other hand,  for each integer $k$ greater than $2$, we construct an analytic  polynomial $p$ with degree $k$ such that the spectrum of the  Toeplitz operator with symbol ${\overline{z}+p}$ has at least one isolated point but has at most finitely many isolated points, see Theorems \ref{deg>2} and \ref{deg=k} for the details. The significance of  further discussing the spectral structure of these operators is that isolated spectral points are closely related to their nontrivial invariant subspaces and  hypercyclicity \cite{Bay}.

In addition, for a bounded linear operator on a Hilbert space, the topological structure of the spectrum plays an important role in the study of its \emph{Weyl spectrum} (which will be introduced in Section 5), see \cite{Ber, Ber2, Co} and \cite{Ob} for  the classical results. It is known that the Weyl spectra of every hermitian operator and every normal operator consist precisely of all points in the spectra except the isolated eigenvalues of finite  geometric multiplicity. ``Weyl's theorem for an operator"  was first introduced by Coburn \cite{Co} in 1966, which says that the complement in the spectrum of the Weyl spectrum coincides with the isolated points of the spectrum which are eigenvalues of finite  geometric multiplicity. Moreover, Coburn showed that Weyl's theorem holds for all hyponormal operators and Hardy-Toeplitz operators \cite{Co}. Weyl type theorems  with respect to isolated points  of the spectrum of an operator were investigated  for many cases and many classes of operators.  Based on the characterizations for the spectra of Toeplitz operators in Theorems \ref{key lemma} and \ref{deg>2}, we show in Theorem \ref{weyl} that the Bergman-Toeplitz operator $T_{\overline{z}+q}$ satisfies Weyl's theorem, where $q$ is an  arbitrary function in the disk algebra $H^\infty\cap C(\overline{\mathbb D})$.

\section{Preliminary}

In this section, we first introduce some notations and include some lemmas.
 As usual, we use $\sigma(T_\varphi)$, $\sigma_{p}(T_\varphi)$ and $\sigma_e(T_\varphi)$ to denote the spectrum, point spectrum (or the set of  eigenvalues) and essential spectrum of the Toeplitz operator $T_\varphi$, respectively. Let $\mathbb N$ denote the set of  nonnegative integers.
The following lemma is useful for us to look for eigenvalues of  Toeplitz operators with some harmonic symbols \cite[Lemma 2.1]{SZ}.

\begin{lemma}\label{a formula}
For each function $f\in L_a^2$, we have
$$T_{\overline{z}}f(z)=\frac{1}{z^2}\int_0^z wf'(w)dw.$$
\end{lemma}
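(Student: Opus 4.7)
The plan is to verify the identity first on the orthonormal basis of monomials $\{z^n\}_{n \geq 0}$ and then bootstrap to arbitrary $f \in L_a^2$ via a power series / uniform-convergence argument.

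First, I would compute $T_{\overline{z}} z^n$ directly. Since $P(\overline{z}\cdot z^n)$ is the unique analytic function whose Fourier coefficients against $z^k$ match those of $\overline{z}\, z^n$, and since $\int_{\mathbb D} z^n \overline{z}^{k+1}\, dA = 0$ unless $k = n-1$, the output is a scalar multiple of $z^{n-1}$. Using the normalization of $dA$, a one-line polar-coordinates computation gives $\int_{\mathbb D}|z|^{2n}\,dA = \frac{1}{n+1}$ and $\|z^{n-1}\|^2 = \frac{1}{n}$, so
\[
T_{\overline{z}} z^n \;=\; \frac{n}{n+1}\, z^{n-1} \qquad (n \geq 1),
\]
with $T_{\overline{z}} 1 = 0$. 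On the other hand, the right-hand side of the claimed formula applied to $f(z)=z^n$ gives
\[
\frac{1}{z^2}\int_0^z w\cdot n w^{n-1}\, dw \;=\; \frac{n}{n+1}\, z^{n-1},
\]
so the two expressions coincide on each monomial.

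Next, I would pass to a general $f \in L_a^2$ by writing $f(z) = \sum_{n=0}^{\infty} a_n z^n$, where this Taylor series converges in $L_a^2$ and uniformly on compact subsets of $\mathbb D$. On the operator side, boundedness of $T_{\overline{z}}$ yields
\[
T_{\overline{z}} f(z) \;=\; \sum_{n=1}^{\infty} a_n \frac{n}{n+1}\, z^{n-1}
\]
in $L_a^2$, and in particular pointwise in $\mathbb D$. On the formula side, for any fixed $z\in\mathbb D$, uniform convergence of $\sum n a_n w^{n-1}$ on the compact segment from $0$ to $z$ justifies termwise integration:
\[
\frac{1}{z^2}\int_0^z w f'(w)\, dw \;=\; \frac{1}{z^2}\sum_{n=1}^{\infty} n a_n \int_0^z w^n\, dw \;=\; \sum_{n=1}^{\infty} \frac{n a_n}{n+1}\, z^{n-1},
\]
matching the expression above.

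There is no real obstacle here; the only point requiring care is the justification of termwise integration, which follows immediately because Bergman-space convergence implies uniform convergence on compact subsets, so the power series of $f'$ converges uniformly on the segment $[0,z]\subset\mathbb D$. The identity at $z=0$ is interpreted via the removable singularity at $0$ (the integrand vanishes to order $2$ there), matching $T_{\overline{z}} f(0)$ which is the constant term of the Taylor series of $T_{\overline{z}} f$, namely $\tfrac{1}{2} a_1$.
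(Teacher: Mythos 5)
Your proof is correct. The paper does not prove this lemma at all---it simply cites it from Sundberg--Zheng (their Lemma~2.1)---so there is no in-text argument to compare against. Your computation $T_{\overline{z}}z^n=\frac{n}{n+1}z^{n-1}$ (and $T_{\overline{z}}1=0$) is right, the right-hand side applied to $z^n$ agrees, and the passage to general $f$ is sound: $L_a^2$-convergence of $\sum a_n z^n$, boundedness of $T_{\overline{z}}$, the fact that $L_a^2$-convergence implies local uniform convergence, and uniform convergence of the power series of $f'$ on the segment $[0,z]$ together justify exchanging the sum with both the operator and the integral. The removable-singularity remark at $z=0$ is also correctly handled. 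In short: the verification on the monomial orthonormal basis followed by a density/continuity argument is a clean and standard way to establish this identity, and it fills in a proof the paper itself leaves to a citation.
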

The next lemma is about the Fredholm theory of Toeplitz operators  with continuous symbols on the Bergman space \cite{Str, Zhu}.
\begin{lemma}\label{Fredholm index}
Suppose  that $\varphi \in C(\overline{\mathbb D})$.  Then the essential spectrum of the Toeplitz operator $T_{\varphi}$ is given by
\begin{align*}
\sigma_{e}(T_{\varphi})&=\varphi (\partial \mathbb D).
\end{align*}
Moreover, if $T_\varphi$ is a Fredholm operator, then the Fredholm index of $T_\varphi$ is given by
\begin{align*}
\mathrm{index}(T_{\varphi})&=\mathrm{dim\ ker}(T_{\varphi})-\mathrm{dim\ ker}(T^*_{\varphi})=-\mathrm{wind}\big(\varphi(\partial \mathbb D), 0\big),
\end{align*}
where $\mathrm{wind}\big(\varphi(\partial \mathbb D), 0\big)$ is the winding number of the closed oriented curve $\varphi(\partial \mathbb D)$ with respect to the origin,
which is defined  by
$$\mathrm{wind}\big(\varphi(\partial \mathbb D), 0\big)=\frac{1}{2\pi \mathrm{i}}\int_{\varphi(\partial \mathbb D)}\frac{dz}{z}.$$
\end{lemma}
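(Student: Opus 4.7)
The plan is to exploit the short exact sequence linking the $C^*$-algebra of Bergman--Toeplitz operators with continuous symbols, the ideal $\mathcal{K}(L_a^2)$ of compact operators, and $C(\partial \mathbb D)$. First I would establish two compactness facts: (a) if $\varphi \in C(\overline{\mathbb D})$ vanishes on $\partial \mathbb D$ then $T_\varphi$ is compact, which one proves by uniformly approximating $\varphi$ by functions supported on compact subsets of $\mathbb D$ and checking that such Toeplitz operators are Hilbert--Schmidt via the reproducing-kernel estimate; and (b) the semicommutator $T_\varphi T_\psi - T_{\varphi \psi} = -H^*_{\overline{\varphi}} H_\psi$ is compact whenever $\varphi, \psi \in C(\overline{\mathbb D})$, which reduces to the compactness of Hankel operators with continuous symbols on the Bergman space.

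From (a) and (b) the correspondence $\varphi|_{\partial \mathbb D} \mapsto T_\varphi + \mathcal{K}(L_a^2)$ extends to an injective $*$-homomorphism $\iota: C(\partial \mathbb D) \hookrightarrow \mathcal{B}(L_a^2)/\mathcal{K}(L_a^2)$ with abelian image, hence an isometric $*$-isomorphism onto its range. The essential-spectrum claim then follows at once:
\[
\sigma_e(T_\varphi) \;=\; \sigma_{C(\partial \mathbb D)}(\varphi|_{\partial \mathbb D}) \;=\; \varphi(\partial \mathbb D).
\]
For the index, I would first verify the formula on monomials: $T_{z^n}$ is multiplication by $z^n$ on $L_a^2$, which is injective with cokernel $\mathrm{span}\{1, z, \dots, z^{n-1}\}$, giving $\mathrm{index}(T_{z^n}) = -n = -\mathrm{wind}(z^n, 0)$, while $T_{\overline{z}^n}$ is handled by taking adjoints. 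For a general Fredholm $T_\varphi$, the boundary restriction $\varphi|_{\partial \mathbb D}$ is zero-free and can be homotoped within $C(\partial \mathbb D) \setminus \{0\}$ to $z^k$ with $k = \mathrm{wind}(\varphi(\partial \mathbb D), 0)$. Lifting the homotopy to $\overline{\mathbb D}$ (for instance by harmonic extension, which preserves continuity and avoids zeros after a small perturbation) yields a norm-continuous path of Fredholm Toeplitz operators along which the Fredholm index is constant, producing the asserted identity.

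The main obstacle is ingredient (b): the compactness of $H_\psi$ for $\psi \in C(\overline{\mathbb D})$ is the technical heart, since without continuity up to the boundary it may fail on the Bergman space. At this step I would invoke the classical compactness results cited in \cite{Str, Zhu} rather than reprove them, which makes the rest of the argument essentially a $C^*$-algebraic and topological exercise.
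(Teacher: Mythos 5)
The paper does not prove this lemma; it simply cites the Fredholm theory for Bergman--Toeplitz operators with continuous symbols to \cite{Str} and \cite{Zhu}, so there is no internal proof to compare against. Your sketch is the standard $C^*$-algebraic argument that appears in those references, and the overall strategy (compactness of $T_\varphi$ for $\varphi$ vanishing on $\partial\mathbb D$, compactness of semicommutators, passing to the Calkin algebra, and homotoping the symbol to $z^k$ to read off the index) is sound.

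Two points deserve more care. First, you assert without justification that the symbol map $\iota\colon C(\partial\mathbb D)\to \mathcal B(L_a^2)/\mathcal K$ is injective; this is exactly what upgrades the easy inclusion $\sigma_e(T_\varphi)\subset\varphi(\partial\mathbb D)$ to an equality, and it amounts to the statement that a Toeplitz operator with symbol in $C(\overline{\mathbb D})$ is compact only if its symbol vanishes on $\partial\mathbb D$. This is true but not free: one either invokes the Berezin transform (the Berezin transform of a compact operator tends to $0$ at the boundary, while for $\varphi\in C(\overline{\mathbb D})$ it tends to $\varphi$), or tests $T_\varphi-\varphi(\zeta)I$ against normalized reproducing kernels $k_z$ as $z\to\zeta\in\partial\mathbb D$. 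You should state which route you take. Second, the parenthetical ``avoids zeros after a small perturbation'' in the homotopy step is a red herring: Fredholmness and the index of $T_{\Phi_t}$ depend only on the boundary values $\Phi_t|_{\partial\mathbb D}$, which are nonvanishing by construction, so whether the harmonic extension $\Phi_t$ vanishes inside $\mathbb D$ is irrelevant. The essential ingredients for that step are just $\|T_{\Phi_t}-T_{\Phi_s}\|\leq\|\Phi_t-\Phi_s\|_\infty$ (norm continuity of the path), the already-established description of $\sigma_e$ (Fredholmness along the path), and the compactness of $T_{\Phi_0}-T_\varphi$ from part (a) (so the endpoints match). With those two clarifications the proposal is a complete and correct account of the standard proof.
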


We will use the following spectral picture theorem  \cite[Proposition 1.27]{Per} to analyze  isolated points in the spectrum of  a  Toeplitz operator on the Bergman space.
\begin{thm} \textbf{(Pearcy)} \label{spectral picture theorem}
Let $T$ be a bounded linear operator on a Hilbert space $\mathscr{H}$ and $H$ be  ``a hole in $\sigma_e(T)$" (which is  a bounded component of $\mathbb C\backslash \sigma_e (T)$) such that $$\mathrm{index}(T-\lambda I)=0, \ \ \ \lambda \in H,$$
then either \\
$\mathrm{(a)}$  $H\cap \sigma(T)=\varnothing$,\\
$\mathrm{(b)}$ $H\subset \sigma(T)$, or\\
$\mathrm{(c)}$ $H\cap \sigma(T)$ is a countable set of isolated eigenvalues of $T$, each having finite multiplicity.

Furthermore, the intersection of $\sigma(T)$ with the unbounded component of $\mathbb C \backslash \sigma_e(T)$ is a countable set of isolated eigenvalues of $T$, each of which has finite multiplicity.
\end{thm}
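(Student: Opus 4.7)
The plan is to apply the analytic Fredholm theorem to the operator-valued function $F(\lambda) = T - \lambda I$ on each connected component of $\mathbb{C}\setminus \sigma_e(T)$. First I would observe that on any such component $\Omega$, the operator $T - \lambda I$ is Fredholm for every $\lambda \in \Omega$ (by the definition of the essential spectrum), and its Fredholm index is constant in $\lambda$, since the index is a continuous integer-valued function. On the hole $H$ this constant equals $0$ by hypothesis; on the unbounded component $U$ it also equals $0$, because for $|\lambda| > \|T\|$ the Neumann series shows that $T - \lambda I$ is invertible.

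The core engine is the analytic Fredholm theorem: if $F \colon \Omega \to \mathcal{B}(\mathscr{H})$ is holomorphic on a connected open set $\Omega$ and takes values in the Fredholm operators of index $0$, then either $F(\lambda)$ is non-invertible for every $\lambda \in \Omega$, or the set $\{\lambda \in \Omega : F(\lambda) \text{ is not invertible}\}$ is discrete in $\Omega$, hence at most countable. Applied to $F(\lambda) = T - \lambda I$ on $\Omega = H$, the first alternative gives $H \subset \sigma(T)$, which is case (b); the second alternative gives $H \cap \sigma(T)$ discrete in $H$. If this discrete set is empty we are in case (a); otherwise we are in case (c), because at each such $\lambda$ the operator $T-\lambda I$ is Fredholm of index $0$ and non-invertible, so $0 < \dim \ker(T - \lambda I) = \dim \ker(T - \lambda I)^* < \infty$, making $\lambda$ an isolated eigenvalue of finite multiplicity (isolatedness in $\sigma(T)$ is automatic because $H$ is open in $\mathbb{C}$).

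The ``furthermore'' assertion about $U$ follows from the same theorem, since $U$ contains all $\lambda$ with $|\lambda| > \|T\|$ and therefore meets the resolvent set; this rules out the ``always non-invertible'' alternative, so $U \cap \sigma(T)$ is automatically discrete in $U$, and each of its points is an eigenvalue of finite multiplicity by the argument above.

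The main obstacle is the analytic Fredholm theorem itself. I would handle it by a standard local reduction: near a chosen $\lambda_0 \in \Omega$, write $F(\lambda) = A(\lambda)\bigl(I + K(\lambda)\bigr)$, where $A(\lambda)$ is invertible in a neighborhood of $\lambda_0$ and $K(\lambda)$ is a holomorphic finite-rank perturbation arranged so that $F(\lambda_0) + K(\lambda_0)$ is invertible, which is possible because Fredholm of index $0$ means the defect can be repaired by a finite-rank map of the kernel onto a complement of the range. Non-invertibility of $F(\lambda)$ then reduces to the vanishing of $\det\bigl(I + K(\lambda)\bigr)$ restricted to the finite-dimensional range of $K(\lambda)$; this is a scalar holomorphic function whose zero set is either the entire neighborhood or a discrete subset, and connectedness of $\Omega$ then propagates the local dichotomy to the global one required above.
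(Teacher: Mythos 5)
The paper does not prove this statement: it is quoted as \cite[Proposition 1.27]{Per}, so there is no in-paper argument to compare against. Your proof is correct and follows the standard route through the analytic Fredholm alternative. The global structure is sound: local constancy of the Fredholm index on each component of $\mathbb{C}\setminus\sigma_e(T)$; the unbounded component has index $0$ because it contains $\{|\lambda|>\|T\|\}$; and the alternative then yields the trichotomy (a)/(b)/(c) on $H$ while ruling out the ``all non-invertible'' option on $U$ since $U$ meets the resolvent set. The inference that a non-invertible index-$0$ Fredholm point $\lambda$ is an eigenvalue with $0<\dim\ker(T-\lambda I)<\infty$, isolated in $\sigma(T)$ because $H$ (resp.\ $U$) is open and the discrete set has no accumulation point there, is also right, and discreteness in an open subset of $\mathbb{C}$ does give countability. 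The one place that deserves tightening is the local reduction in your last paragraph: the clean version is $A(\lambda)=F(\lambda)+R$ with $R$ a \emph{fixed} finite-rank operator chosen so that $F(\lambda_0)+R$ is invertible, giving $F(\lambda)=A(\lambda)\bigl(I-A(\lambda)^{-1}R\bigr)$ on a neighborhood of $\lambda_0$; and the scalar holomorphic function whose zeros you track is not a determinant on the $\lambda$-dependent range of $K(\lambda)=-A(\lambda)^{-1}R$ (which is not a fixed subspace) but the $m\times m$ determinant $\det\bigl(\delta_{ij}-\langle A(\lambda)^{-1}f_i,e_j\rangle\bigr)_{i,j}$ obtained from a representation $R=\sum_{i=1}^{m}\langle\cdot,e_i\rangle f_i$. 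With that adjustment the local dichotomy is genuinely scalar-analytic, and the open-and-closed argument over the connected $\Omega$ closes as you describe.
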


The following theorem gives a characterization for the eigenvalues of a class of Toeplitz operators with harmonic symbols on the Bergman space, which is useful for us to
study the isolated points in the spectra of Toeplitz operators with some bounded harmonic symbols.
\begin{thm}\label{key lemma}
Let $p$ be a function in $H^\infty\cap C(\overline{\mathbb D})$. Suppose that $\lambda$ is a complex number not in the essential spectrum of the Toeplitz operator $T_{\overline{z}+p}$. Then $\lambda$ is an eigenvalue of  $T_{\overline{z}+p}$ if and only if  either
$1+z[p(z)-\lambda]$ does not vanish on the unit disk or $1+z[p(z)-\lambda]$ has finitely many simple zeros $\big\{z_1, \cdots, z_k\big\}$ in the unit disk which satisfy
\begin{align}\label{eqonp}
z_j^2p'(z_j)=\frac{n_j+2}{n_j+1}
\end{align}
 for some integer $n_j\in \big\{0, 1, 2, \cdots\big\}$ with $j=1, 2, \cdots, k$.
\end{thm}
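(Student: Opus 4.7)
The strategy is to convert the eigenvalue equation $T_{\overline{z}+p}f=\lambda f$ into a first order complex ODE with regular singular points in $\mathbb D$, and to translate the requirement that a global analytic solution lie in $L_a^2$ into algebraic conditions on $p$ at the zeros of $g(z):=1+z(p(z)-\lambda)$. Since $p\in H^\infty$ gives $T_p f=pf$, the eigenvalue equation reduces to $T_{\overline{z}}f=(\lambda-p)f$; applying Lemma~\ref{a formula} and integrating $\int_0^z wf'(w)\,dw$ by parts transforms this into the equivalent identity
\begin{align*}
h(z)f(z)=F(z),\qquad h(z):=zg(z),\quad F(z):=\int_0^z f(w)\,dw,
\end{align*}
which, using $F'=f$, is the first order ODE $F=hF'$ with $F(0)=0$. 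Because $p\in C(\overline{\mathbb D})$, Lemma~\ref{Fredholm index} together with $\lambda\notin\sigma_e(T_{\overline{z}+p})$ forces $g(\zeta)\neq 0$ for every $\zeta\in\partial\mathbb D$, so $g$ has at most finitely many zeros $z_1,\dots,z_k$ in $\mathbb D$.

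For necessity I would perform a Frobenius-type local analysis of $F=hF'$ at every zero of $h$ inside $\mathbb D$. The origin is always a simple zero of $h$ with leading coefficient $g(0)=1$, which imposes no condition. At a zero $z_j$ of $g$, write $g(z)=(z-z_j)^{m_j}G_j(z)$ with $G_j(z_j)\neq 0$; if $m_j\geq 2$ then $z_j$ is an irregular singular point of the ODE, every nonzero local solution has an essential singularity there, and no analytic eigenvector can exist. Hence $m_j=1$ and the ODE reduces locally to
\begin{align*}
\frac{F'(z)}{F(z)}=\frac{1}{z_j g'(z_j)}\cdot\frac{1}{z-z_j}+\text{(analytic at $z_j$)},
\end{align*}
so $F(z)\sim (z-z_j)^{1/[z_j g'(z_j)]}$ near $z_j$, and analyticity forces $1/[z_j g'(z_j)]=\ell_j\in\{1,2,\dots\}$. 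Combining $g(z_j)=0\Rightarrow p(z_j)-\lambda=-1/z_j$ with $g'(z_j)=(z_j^2 p'(z_j)-1)/z_j$, this becomes exactly $z_j^2 p'(z_j)=1+1/\ell_j=(n_j+2)/(n_j+1)$ with $n_j:=\ell_j-1\in\mathbb N$.

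For sufficiency I would construct an explicit eigenvector by running the previous calculation in reverse. Setting $F(z)=zU(z)$ reduces the ODE to $U'/U=(\lambda-p)/g$, and the residues just computed give
\begin{align*}
\frac{\lambda-p(z)}{g(z)}=\sum_{j=1}^{k}\frac{n_j+1}{z-z_j}+\widetilde\phi(z),
\end{align*}
where $\widetilde\phi$ extends analytically across each $z_j$ precisely because each residue $n_j+1$ is a non-negative integer, and $\widetilde\phi\in H^\infty(\mathbb D)$ because $|g|$ is bounded below near $\partial\mathbb D$. Integrating and exponentiating then produces the single-valued function
\begin{align*}
F(z)=cz\prod_{j=1}^{k}\!\left(\frac{z-z_j}{-z_j}\right)^{n_j+1}\exp\!\left(\int_0^z\widetilde\phi(w)\,dw\right)\in H^\infty(\mathbb D),
\end{align*}
and, since $F$ is a polynomial factor times $E:=\exp\int_0^{\cdot}\widetilde\phi$ with $E'=\widetilde\phi E\in H^\infty$, differentiation shows $f=F'\in H^\infty\subset L_a^2$ with $f(0)=c\neq 0$. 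The main obstacle I anticipate is this global assembly of local Frobenius data: the integrality of every residue $1/[z_j g'(z_j)]$ is exactly what collapses the a priori multivalued primitive $\int(\lambda-p)/g$ into a single-valued analytic function on $\mathbb D$, and thereby allows an $L_a^2$ eigenvector to exist at all.
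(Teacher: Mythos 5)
Your argument is correct, and it follows a genuinely streamlined variant of the paper's approach. The paper differentiates the integral identity once to obtain a first-order ODE directly for $f$, namely $gf'=-\big(2(p-\lambda)+zp'\big)f$ with $g=1+z(p-\lambda)$, and then (i) rules out multiple zeros of $g$ by a somewhat ad hoc two-step argument (first showing $f(z_j)=f'(z_j)=0$ by differentiating the ODE twice, then deriving a pole-order mismatch) and (ii) computes the residue of the rational function $\big(2(p-\lambda)+zp'\big)/g$ at each $z_j$ to get $z_j^2p'(z_j)=(n_j+2)/(n_j+1)$. You instead integrate by parts \emph{before} differentiating, so the integral identity becomes the single clean relation $F=hF'$ for the primitive $F=\int_0^z f$, with $h(z)=zg(z)$; the whole analysis then collapses to reading off $F'/F=1/h$. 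This buys three simplifications at once: (a) simplicity of the $z_j$ drops out immediately because $F'/F$ can have at most a simple pole at any point where $F$ is analytic, while $1/h$ has a pole of order $m_j$; (b) the residue computation is on $1/h$, which is $1/(z_jg'(z_j))$ at a simple zero, noticeably lighter than the paper's computation; (c) the origin (where $h$ vanishes and the paper's ODE is regular) is handled uniformly as the residue-$1$ singular point forcing $F$ to vanish to first order, i.e.\ $f(0)\ne 0$. Your sufficiency construction $F=cz\prod_j\bigl((z-z_j)/(-z_j)\bigr)^{n_j+1}\exp\int_0^z\widetilde\phi$ is the exact analogue of the paper's $f=g(z)\prod(z-z_j)^{n_j}$ construction, and your justification that $\widetilde\phi\in H^\infty$ (because $g$ is bounded away from $0$ near $\partial\mathbb D$) matches the paper's reasoning that the residual partial fraction lies in $H^\infty$. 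The proof is complete; the only thing worth stating explicitly is the elementary verification that $F=hF'$ with $F(0)=0$ is \emph{equivalent} to (not merely implied by) the eigenvalue identity, which you invoke implicitly and which indeed holds because the integration by parts is reversible.
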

\begin{proof} Let $\lambda$ be a complex number which is not contained in the essential spectrum of the Toeplitz operator $T_{\overline{z}+p}$. We have by Lemma \ref{Fredholm index} that $1+z[p(z)-\lambda ]$ does not vanish on the unit circle $\partial \mathbb D$, which yields that $1+z[p(z)-\lambda]$ has at most finitely many zeros in the open  unit disk $\mathbb D$.

 Clearly, $\lambda$ is an eigenvalue of $T_{\overline{z}+p}$ if and only if
$$(T_{\overline{z}+p}-\lambda)f=0$$
for some nonzero function $f$ in the Bergman space $L_a^2$.
   By Lemma \ref{a formula}, we have  the following integral equation:
$$\frac{1}{z^2}\int_0^z wf'(w)dw=[\lambda-p(z)]f(z).$$
Multiplying both sides of the above equation by $z^2$ and then taking derivatives give the following first order differential equation:
\begin{align}\label{difff}
\Big(1+z[p(z)-\lambda]\Big)f'(z)=-\Big(2[p(z)-\lambda]+zp'(z)\Big)f(z).
\end{align}
Therefore, the above arguments show that $\lambda\in \sigma_p(T_{\overline{z}+p})$ if and only if $\lambda$ satisfies Equation (2) for some $f\neq 0$ in the Bergman space.

Let us first show the necessity. Suppose that
$\lambda$ is an eigenvalue of the Toeplitz operator $T_{\overline{z}+p}$. If $1+z[p(z)-\lambda ]$ has no zeros in the unit disk, then we obtain the desired conclusion. Otherwise, we may assume that the zeros of $1+z[p(z)-\lambda]$ in $\mathbb D$ are given by $\big\{z_1, z_2, \cdots, z_k\big\}$.

We first show that $z_1, z_2, \cdots, z_k$ are all simple zeros of $1+z[p(z)-\lambda ]$. If some of them, e.g., $z_j$ ($j\in \{1, 2, \cdots, k\}$) is a multiple zero, using
$$\begin{cases}
\mathlarger{1+z_j[p(z_j)-\lambda]=0},\vspace{2mm}\\
\mathlarger{p(z_j)-\lambda+z_j p'(z_j)=0},
\end{cases}$$
we get
$$z_j^2 p'(z_j)=1,$$
to obtain
\begin{align*}
2z_j[p(z_j)-\lambda]+z_j^2 p'(z_j)&= -2+1=-1.
\end{align*}
Combining this with  (\ref{difff}) gives us $f(z_j)=0$. Taking derivatives on both sides of (\ref{difff}) gives
$$(p-\lambda+zp')f'+\big[1+z(p-\lambda)\big]f''=-\big(3p'+zp''\big)f-\big[2(p-\lambda)+zp'\big]f',$$
to get $f'(z_j)=0$, since we have
 $$\begin{cases}
\mathlarger{f(z_j)=0},\vspace{1.8mm}\\
\mathlarger{1+z_j[p(z_j)-\lambda]=0},\vspace{1.8mm}\\
\mathlarger{p(z_j)-\lambda+z_j p'(z_j)=0},\vspace{1.8mm}\\
\mathlarger{2[p(z_j)-\lambda]+z_j p'(z_j)=-\frac{1}{z_j}}.
\end{cases}$$
Thus we can write $$f(z)=(z-z_j)^{n_j}g_j(z)$$ for some integer $n_j>1$ and $g_j\in L^2_a$ with $g(z_j)\neq 0$. According to the above expression of
$f$, we get
$$-\frac{f'(z)}{f(z)}=\frac{-n_j}{z-z_j}+\frac{-g_j^{\prime}(z)}{g_j(z)},$$
which gives that $-f'/f$ has a pole $z_j $ with order at most $1$. On the other hand, note that the function $$\frac{2[p(z)-\lambda]+zp'(z)}{1+z[p(z)-\lambda]}$$ has a pole $z_j $ with order greater than $1$, since $z_j$ is  a  multiple zero of $1+z[p(z)-\lambda]$ (by the assumption) but not a zero of $2[p(z)-\lambda]+z p'(z)$. This implies that $\lambda$ can not satisfy  the following equation:
$$ -\frac{f'(z)}{f(z)}= \frac{2[p(z)-\lambda]+zp'(z)}{1+z[p(z)-\lambda]},$$
which contradicts that $\lambda$ is an eigenvalue of $T_{\overline{z}+p}$. Thus $z_1, z_2, \cdots , z_k$  are all simple zeros of $1+z[p(z)-\lambda]$ in the unit disk.

In order to derive the condition for $z_j$ in (\ref{eqonp}), we first  consider the case of
$$2[p(z_j)-\lambda]+z_j p'(z_j)\neq 0,$$
where $z_j\in \big\{z_1, z_2, \cdots , z_k\big\}$.
Since the function $f$ is analytic on $\mathbb D$, we can choose a small circle $\gamma\subset \mathbb D$ centered at $z_j$ such that there is no zero of $f$ on $\gamma$ and all other zeros of $1+z[p(z)-\lambda]$ are outside $\gamma$. Then the Cauchy residue theorem
implies that there exists some  $n_j\in \big\{0, 1, 2, \cdots \big\}$ such that
\begin{align*}
-n_j&=-\frac{1}{2\pi \mathrm{i}}\int_{\gamma} \frac{f'(z)}{f(z)}dz\\
&=\frac{1}{2\pi  \mathrm{i}}\int_{\gamma} \frac{2[p(z)-\lambda]+zp'(z)}{1+z[p(z)-\lambda]}dz \ \ \ \ \ \  \ \ \  \big(\mathrm{by\ Equation} \ (\ref{difff})\big)\\
&=\mathrm{Res}\bigg(\frac{2[p(z)-\lambda]+zp'(z)}{1+z[p(z)-\lambda]}; \  z_j\bigg)\\
&=\frac{2p(z_j)-2\lambda+z_j p'(z_j)}{p(z_j)-\lambda+z_j p'(z_j)}\\
&=1+\frac{1}{1+\frac{z_j}{p(z_j)-\lambda}p'(z_j)}\\
&=1+\frac{1}{1-z_j^2 p'(z_j)},
\end{align*}
where the fourth equality follows from that $2[p(z_j)-\lambda]+z_j p'(z_j)\neq 0$ and $z_j$ is a simple zero of $1+z[p(z)-\lambda]$, and
the last equality comes from that $$p(z_j)-\lambda=-\frac{1}{z_j}.$$ Solving the above equation gives
$$z_j^2 p'(z_j)=\frac{n_j+2}{n_j+1},$$
to obtain (\ref{eqonp}).

For the case of   $$2[p(z_j)-\lambda]+z_j p'(z_j)=0, $$ we have following system:
$$\begin{cases}
\mathlarger {2[p(z_j)-\lambda]+z_j p'(z_j)=0},\vspace{2mm}\\
\mathlarger {1+z_j[p(z_j)-\lambda]=0},
\end{cases}$$
which implies that
$$z_j^2 p'(z_j)=2.$$
 In conclusion, we obtain (\ref{eqonp}) for $n_j=0$, to finish the proof of the  necessity.

Now we turn to the proof of the  sufficiency, we need to show $\lambda\in \sigma_p(T_{\overline{z}+p})$. By the assumption, we  need to consider two cases. First, let us  discuss the case that $1+z[p(z)-\lambda]$ has no zeros on the closed unit disk $\overline{\mathbb D}$. In this case, we define a nonzero function $f$ by
$$f(z)=\exp\bigg\{-\int_0^z \frac{2[p(w)-\lambda]+wp'(w)}{1+w[p(w)-\lambda]} dw\bigg\},$$
  where the integrand
 $$\frac{2[p(w)-\lambda]+wp'(w)}{1+w[p(w)-\lambda]}$$ is a bounded  and  analytic  function on $\mathbb D$. Thus $f$  belongs to $L_a^2$. Moreover,
 simple calculations give us that
 \begin{align*}
 \frac{f'(z)}{f(z)}
&=-\frac{2[p(z)-\lambda]+zp'(z)}{1+z[p(z)-\lambda]},
\end{align*}
 which means that the function $f$ defined above  satisfies Equation (\ref{difff}), and so $\lambda$ is an eigenvalue of the Toeplitz  operator $T_{\overline{z}+p}$.

To finish the proof, it remains to consider the case that $1+z[p(z)-\lambda]$ has simple zeros  $\big\{z_{1}, z_2, \cdots, z_k\big\}$  in $\mathbb D$ such that
$$z_j^2 p'(z_j)=\frac{n_j+2}{n_j+1}$$
for some integer $n_j\in \big\{0, 1, 2, \cdots\big\}$ with $j=1, 2, \cdots, k$. In order to show that $\lambda$ is an eigenvalue of $T_{\overline{z}+p}$  in this case, we need to solve  Equation (\ref{difff}).  Let us consider the following function:
$$f(z)=g(z)\prod_{j=1}^k (z-z_j)^{n_j},$$
where $$g(z)=\exp\bigg\{-\int_0^z \bigg(\frac{2[p(w)-\lambda]+w p'(w)}{1+w[p(w)-\lambda]}+\sum_{j=1}^k\frac{n_j}{w-z_j}\bigg)dw \bigg\}.$$
We are going to verify that $f$ is an eigenvector of $T_{\overline{z}+p}$.

Since the zeros $\big\{z_{1}, z_2, \cdots, z_k\big\}\subset \mathbb D$ of $1+w[p(w)-\lambda]$ are all simple, we can write
$$\frac{2[p(w)-\lambda]+wp'(w)}{1+w[p(w)-\lambda]}=\sum_{j=1}^{k}\frac{a_j}{w-z_j}+\sum_{l=1}^{N}\frac{b_l}{(w-w_l)^{m_l}}$$
for some $w_l$ outside the closure of the unit disk, $m_l\in \mathbb N$ and $a_j, b_l$ are all complex  constants.
Recall that $$z_j^2 p'(z_j)=\frac{n_j+2}{n_j+1} \ \ \ \ \ \ \big(j=1,2,\cdots, k\big)$$  and  each $z_j \in \mathbb D$ is the zero  of $1+z[p(z)-\lambda]$ with multiplicity $1$. Repeating the same arguments as the one in the proof of the necessity, we obtain by the Cauchy residue theorem that
 $a_j=-n_j$ for every $1\leqslant j \leqslant k$, so $$\frac{2[p(w)-\lambda]+w p'(w)}{1+w[p(w)-\lambda]}+\sum_{j=1}^k\frac{n_j}{w-z_j}=\sum_{l=1}^{N}\frac{b_l}{(w-w_l)^{m_l}}$$
 is in $H^\infty$ and hence
$g\in H^\infty$. Moreover, from the definitions of $f$ and $g$ we have
\begin{align*}
\frac{f'(z)}{f(z)}&=\sum_{j=1}^k\frac{n_j}{z-z_j}+\frac{g'(z)}{g(z)}\\
&=\sum_{j=1}^k\frac{n_j}{z-z_j}-\bigg(\frac{2[p(z)-\lambda]+z p'(z)}{1+z[p(z)-\lambda]}+\sum_{j=1}^k\frac{n_j}{z-z_j}\bigg)\\
&=-\frac{2[p(z)-\lambda]+z p'(z)}{1+z[p(z)-\lambda]}.
\end{align*}
Noting that $f$ is analytic on $\mathbb D$ and bounded on $\overline{\mathbb D}$, we conclude that $f$ is a nonzero solution of  Equation (\ref{difff}) in the Bergman space  $L_a^2$. Hence $f$ is an eigenvector of $T_{\overline{z}+p}$ corresponding to the eigenvalue $\lambda$. This completes the proof of Theorem \ref{key lemma}.
\end{proof}

 The above theorem leads to  the following complete characterization on the invertibility of the Toeplitz operator $T_{\overline{z}+p}$ with $p\in H^\infty \cap C(\overline{\mathbb D})$  immediately.
\begin{thm}\label{invertibility}
Let $p$ be a function in $H^\infty\cap C(\overline{\mathbb D})$. Then the Toeplitz operator $T_{\overline{z}+p}$  is invertible on the Bergman space $L_a^2$ if and only if the following two conditions hold: \\
$\mathrm{(i)}$ $1+zp$ has no zeros  on the unit circle $\partial \mathbb D$;\\
$\mathrm{(ii)}$ $1+zp$ has exactly one simple zero $z_0$ in the open disk $\mathbb D$ which satisfies that
$$z_0^2p'(z_0)-\frac{n+2}{n+1}\neq 0$$
for any nonnegative integer $n$.
\end{thm}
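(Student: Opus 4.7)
The plan is to combine Lemma \ref{Fredholm index} (for the essential spectrum and the Fredholm index) with Theorem \ref{key lemma} applied to $\lambda = 0$ (for the kernel). Recall that $T_{\overline{z}+p}$ is invertible iff it is Fredholm of index zero and has trivial kernel. I will show that (i) is equivalent to Fredholmness, that (i) together with ``$1+zp$ has a unique zero in $\mathbb D$ (necessarily simple)'' is equivalent to Fredholmness plus index zero, and that adjoining (ii) is exactly what forces the kernel to be trivial.

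First I would observe that on $\partial \mathbb D$ one has $\overline{z}+p(z) = (1+zp(z))/z$, so the symbol $\overline{z}+p$ vanishes at some point of $\partial \mathbb D$ iff $1+zp$ does. By Lemma \ref{Fredholm index}, the non-vanishing of $\overline{z}+p$ on $\partial\mathbb D$ is exactly the condition $0 \notin \sigma_e(T_{\overline{z}+p})$, i.e., the Fredholm condition. Hence (i) is equivalent to $T_{\overline{z}+p}$ being Fredholm.

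Next, assume (i). Since $1+zp$ is analytic on $\mathbb D$, continuous on $\overline{\mathbb D}$, and non-vanishing on $\partial \mathbb D$, a compactness argument gives that $1+zp$ has only finitely many zeros in $\mathbb D$; let $N$ denote their total count with multiplicities. By the argument principle, $\mathrm{wind}(1+zp,0)=N$, so using the factorization on the circle,
$$\mathrm{wind}\bigl((\overline{z}+p)(\partial\mathbb D),0\bigr)=\mathrm{wind}(1+zp,0)-\mathrm{wind}(z,0)=N-1.$$
Lemma \ref{Fredholm index} then yields $\mathrm{index}(T_{\overline{z}+p})=1-N$. Index zero is therefore equivalent to $N=1$, in which case the unique zero $z_0\in\mathbb D$ is automatically simple. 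Conversely, if $N\ge 2$ the cokernel is non-trivial, and if $N=0$ the kernel is non-trivial (Theorem \ref{key lemma} in fact produces an explicit eigenvector via the exponential formula).

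Finally, under (i) with $N=1$ and the unique simple zero $z_0$, I apply Theorem \ref{key lemma} with $\lambda=0$: since $1+zp$ does have a zero in $\mathbb D$, the theorem asserts that $0$ is an eigenvalue of $T_{\overline{z}+p}$ iff $z_0^2 p'(z_0) = (n+2)/(n+1)$ for some nonnegative integer $n$. Therefore $\ker T_{\overline{z}+p}=\{0\}$ exactly when (ii) holds. Assembling the three steps gives the equivalence. The main obstacle is the winding-number bookkeeping in the second step: splitting $\mathrm{wind}((\overline{z}+p)(\partial\mathbb D),0)$ into the contributions of $1+zp$ and $1/z$, identifying the former with $N$ via the argument principle, and concluding that the index-zero condition simultaneously fixes the number and the simplicity of the interior zero. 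Everything else is direct substitution into Lemma \ref{Fredholm index} and Theorem \ref{key lemma}.
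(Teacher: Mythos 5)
Your proof is correct and is precisely the argument the paper is invoking when it says Theorem \ref{invertibility} follows ``immediately'' from Theorem \ref{key lemma}: you reduce invertibility to (Fredholm) $+$ (index zero) $+$ (trivial kernel), identify the first with condition (i) via Lemma \ref{Fredholm index} and the factorization $\overline{z}+p=(1+zp)/z$ on $\partial\mathbb D$, compute $\mathrm{index}(T_{\overline{z}+p})=1-N$ via the winding-number split to get the ``exactly one simple zero'' part of (ii), and then apply Theorem \ref{key lemma} at $\lambda=0$ to convert trivial kernel into the non-resonance condition $z_0^2p'(z_0)\neq(n+2)/(n+1)$. This matches the paper's intent; there is no gap.
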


\section{Quadratic polynomials}
In this section, we  show that the spectrum of the Topelitz operator $T_{\overline{z}+p}$ is connected for every quadratic polynomial $p$. More precisely,  we obtain the following theorem which is analogous to  \cite[Theorem 4.6.1]{Arv} or \cite[Corollary 7.28]{Dou}  about the spectra of  Toeplitz operators on the Hardy space.

\begin{thm}\label{(z)+az2+bz+c}
Let $\varphi(z)=\overline{z}+p(z)$, where $p$ is a quadratic polynomial. The spectrum of the Toeplitz operator operator $T_{\varphi}$ is given by
$$\sigma(T_\varphi)=\varphi(\partial \mathbb D)\bigcup \Big\{\lambda\in \mathbb C: \lambda\notin \varphi(\partial \mathbb D) \ \mathrm{and}\ \mathrm{wind}\big(\varphi(\partial \mathbb D), \lambda\big)\neq 0\Big\},$$
which coincides with the spectrum of the corresponding Hardy-Toeplitz operator with symbol $e^{-\mathrm{i}\theta}+p(e^{\mathrm{i}\theta})$. Hence the spectrum of $T_{\overline{z}+p}$ is connected for every $p(z)=az^2+bz+c$ with $a, b, c \in \mathbb C$.
\end{thm}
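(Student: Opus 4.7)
The plan is to use Lemma \ref{Fredholm index} in tandem with the eigenvalue characterization of Theorem \ref{key lemma} to compute $\sigma(T_\varphi)$ on the complement of $\varphi(\partial \mathbb{D})$. By Lemma \ref{Fredholm index}, $\sigma_e(T_\varphi) = \varphi(\partial \mathbb{D})$, and for any $\lambda \notin \varphi(\partial \mathbb{D})$ the Fredholm index of $T_\varphi - \lambda$ is $-\mathrm{wind}(\varphi(\partial \mathbb{D}), \lambda)$. Thus every $\lambda$ with nonzero winding number lies automatically in $\sigma(T_\varphi)$, while for $\lambda$ with winding number zero the operator $T_\varphi - \lambda$ is Fredholm of index zero, hence invertible if and only if injective. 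The substantive direction therefore reduces to showing $\lambda \notin \sigma_p(T_\varphi)$ whenever $\mathrm{wind}(\varphi(\partial \mathbb{D}), \lambda) = 0$.

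To turn this into something computable I would apply the argument principle to $Q(z) := 1 + z[p(z) - \lambda] = az^3 + bz^2 + (c - \lambda)z + 1$. Since $\varphi(z) - \lambda = Q(z)/z$ on $\partial \mathbb{D}$, the winding number equals $N - 1$, where $N$ is the number of zeros of $Q$ in $\mathbb{D}$ counted with multiplicity. Winding number zero is therefore equivalent to $Q$ having exactly one (necessarily simple) zero $z_0$ in $\mathbb{D}$, and Theorem \ref{key lemma} tells me I have to rule out $z_0^2 p'(z_0) = (n+2)/(n+1)$ for every nonnegative integer $n$.

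The main obstacle is the following identity and ensuing geometric observation. From $Q(z_0) = 0$ a direct calculation yields $z_0 Q'(z_0) = z_0^2 p'(z_0) - 1$, and factoring $Q(z) = a(z - z_0)(z - z_1)(z - z_2)$ with Vieta's relation $a z_0 z_1 z_2 = -1$ rewrites this as
\[
z_0^2 p'(z_0) \;=\; 1 - \Bigl(1 - \tfrac{z_0}{z_1}\Bigr)\Bigl(1 - \tfrac{z_0}{z_2}\Bigr).
\]
Because $\lambda \notin \varphi(\partial \mathbb{D})$ prevents any root of $Q$ from lying on $\partial \mathbb{D}$, we have $|z_0| < 1$ and $|z_1|, |z_2| > 1$, so each factor $1 - z_0/z_j$ lies in the open disk $D(1,1)$. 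Every point of $D(1,1)$ has argument strictly in $(-\pi/2, \pi/2)$, so the product has argument strictly in $(-\pi, \pi)$ and therefore cannot be a negative real number; in particular it is never equal to $-1/(n+1)$, and so $z_0^2 p'(z_0) \neq (n+2)/(n+1)$. The degenerate cases $a = 0$ reduce $Q$ to a lower-degree polynomial, and an analogous but simpler Vieta computation shows $|z_0^2 p'(z_0)| < 1$, which already rules out the values $(n+2)/(n+1) \geq 1$.

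This gives the claimed description of $\sigma(T_\varphi)$, which coincides with the Hardy-Toeplitz spectrum of $e^{-\mathrm{i}\theta} + p(e^{\mathrm{i}\theta})$ by Widom's theorem. Connectedness is then immediate: $\varphi(\partial \mathbb{D})$ is connected as the image of a circle, and each bounded component of its complement with nonzero winding number has boundary contained in $\varphi(\partial \mathbb{D})$, so the union of the closures of all such components with $\varphi(\partial \mathbb{D})$ is connected.
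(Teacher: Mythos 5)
Your proposal is correct and follows essentially the same route as the paper: the Fredholm decomposition via Lemma \ref{Fredholm index}, reduction to an eigenvalue question settled by Theorem \ref{key lemma}, the Vieta identity $z_0^2 p'(z_0) = 1 - \bigl(1 - z_0/z_1\bigr)\bigl(1 - z_0/z_2\bigr)$ obtained by factoring the cubic, and the observation that a product of two numbers with positive real part (equivalently, your points of $D(1,1)$ with argument in $(-\pi/2,\pi/2)$) cannot be a negative real. The only cosmetic difference is that you phrase the final contradiction via arguments while the paper runs an equivalent coordinate computation, and you briefly dispose of the degenerate $a=0$ case inline rather than deferring it to the remark after the theorem.
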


\begin{proof} Let $p$ be a quadratic polynomial $az^2+bz+c$ and let $\varphi(z)=\overline{z}+p(z)$. By Lemma \ref{Fredholm index},  we have
\begin{align*}
 \sigma(T_\varphi)&=\varphi(\partial \mathbb D)\bigcup \Big\{\lambda\in \mathbb C: \lambda\notin \varphi(\partial \mathbb D) \ \mathrm{and}\ \mathrm{wind}\big(\varphi(\partial \mathbb D), \lambda\big)\neq 0\Big\}\\
 & \ \ \ \ \bigcup \bigg(\sigma_p (T_\varphi)\bigcap \Big\{\lambda \in \mathbb C:  \lambda \notin \sigma_e(T_\varphi)\ \mathrm{and}\ \mathrm{index}(T_{\varphi}-\lambda I)=0\Big\}\bigg).
 \end{align*}
 To prove the theorem we need only to show
  that  $$ \sigma_p (T_\varphi)\bigcap \Big\{\lambda \in \mathbb C:  \lambda \notin \sigma_e(T_\varphi)\ \mathrm{and}\ \mathrm{index}(T_{\varphi}-\lambda I)=0\Big\}=\varnothing .$$
Since $\sigma_e(T_\varphi)=\varphi(\partial \mathbb D)$,  and for each complex number $\mu$,
 $$T_{\phi}-\mu I=T_{\phi -\mu}=T_{\overline{z}+az^2+bz+(c-\mu )},$$
 it is sufficient to show that $0$ does not belong to
$$\sigma_p (T_\varphi)\bigcap \Big\{\lambda\in \mathbb C: \lambda \notin \varphi(\partial \mathbb D)\ \mathrm{and}\ \mathrm{wind}\big(\varphi(\partial \mathbb D), \lambda\big)=0\Big\}$$ for any $\varphi(z)=\overline{z}+(az^2+bz+c)$.

To do so, suppose that $0$ belongs to
$$\sigma_p (T_\varphi)\bigcap \Big\{\lambda\in \mathbb C: \lambda \notin \varphi(\partial \mathbb D)\ \mathrm{and}\ \mathrm{wind}\big(\varphi(\partial \mathbb D), \lambda\big)=0\Big\}$$ for some $\varphi(z)=\overline{z}+(az^2+bz+c).$
Noting that $\mathrm{index}(T_{\varphi})=0$, Lemma \ref{Fredholm index} gives us
$$0=\mathrm{index}(T_\varphi)=-\mathrm{wind}\big(\varphi(\partial \mathbb D), 0\big)=-\mathrm{wind}\bigg(\frac{az^3+bz^2+cz+1}{z}\bigg|_{\partial \mathbb D}, \ 0\bigg).$$
By the argument principle, the cubic polynomial
 $az^3+bz^2+cz+1$ has exactly one zero in the disk $\mathbb D$ with multiplicity 1 and hence  has the following factorization:
 $$az^3+bz^2+cz+1=a(z-\alpha)(z+\beta)(z+\gamma)$$
with $|\alpha|<1$, $|\beta|>1$ and $|\gamma|>1$. Evaluating both sides of the above equality at $0$  gives
\begin{equation}\label{aabg}
a\alpha \beta \gamma=-1.
\end{equation}

Since $0$ is  an eigenvalue of the Toeplitz operator $T_{\varphi}$, $T_{\varphi}$ has a nonzero eigenvector   $f$ in $ L_a^2$ such that
$$T_{\varphi}f=T_{\overline{z}+p}f=0.$$   By Theorem \ref{key lemma}, we get
\begin{equation}\label{abg}
\alpha^2p^{\prime}(\alpha ) =\frac{n+2}{n+1}
\end{equation}
for some $n\in \mathbb N$.
Since $$zp(z)+1=az^3+bz^2+cz+1=a(z-\alpha)(z+\beta)(z+\gamma),$$
taking derivatives both sides of the above equality and evaluating at $\alpha$ give
$$\alpha p(\alpha)+\alpha^2p^{\prime}(\alpha )=a\alpha(\alpha+\beta)(\alpha+\gamma)=a\alpha \beta \gamma\Big(1+\frac{\alpha}{\beta}\Big)\Big(1+\frac{\alpha}{ \gamma}\Big).$$
Note that $$\alpha p(\alpha)+1=0,$$ thus combining (\ref{aabg}) with (\ref{abg})  gives
\begin{align}\label{negative}
\Big(1+\frac{\alpha}{\beta}\Big)\Big(1+\frac{\alpha}{ \gamma}\Big)= -\frac{1}{n+1}<0
\end{align}
for some $n\in \mathbb N$. This will lead a contradiction by
 the following simple observation:
\begin{align}\label{nnegative}
zw<0 \Longrightarrow \mathrm{Re}(z)\cdot \mathrm{Re}(w) \leqslant 0.
\end{align}
To derive a contradiction,  using the above observation and Inequality (\ref{negative})  we have
\begin{equation}\label{newneg}
\mathrm{Re}\Big(1+\frac{\alpha}{\beta}\Big)\cdot \mathrm{Re}\Big(1+\frac{\alpha}{\gamma}\Big)\leqslant 0.
\end{equation}
On the other hand, we have that  $\frac{\alpha}{\beta}$ and   $\frac{\alpha}{\gamma}$ are in the unit disk, to get
$$\mathrm{Re}\Big(1+\frac{\alpha}{\beta}\Big)>0 \ \ \ \mathrm{and}\ \ \  \mathrm{Re}\Big(1+\frac{\alpha}{\gamma}\Big)> 0.$$
This contradicts with (\ref{newneg}) and yields that $0$ does not belong to
$$\sigma_p (T_\varphi)\bigcap \Big\{\lambda\in \mathbb C: \lambda \notin \varphi(\partial \mathbb D)\ \mathrm{and}\ \mathrm{wind}\big(\varphi(\partial \mathbb D), \lambda\big)=0\Big\}$$
for any $\varphi(z)=\overline{z}+(az^2+bz+c)$.

To complete the proof, we need to prove Observation $(\ref{nnegative})$. Writing
$$z=x_1+\mathrm{i}y_1 \  \ \mathrm{and} \  \ w=x_2+\mathrm{i}y_2 \ \   \    \ \big(x_1, x_2, y_1, y_2\in \mathbb R\big),$$
we have that  $zw<0$ if and only if
$$\begin{cases}
\mathlarger{x_1x_2-y_1 y_2<0},\vspace{2mm}\\
\mathlarger{x_1y_2+x_2y_1=0}.
\end{cases}$$
If $zw<0$ and neither $x_1$ nor $x_2$ equals $0$, then
 $x_1$ and $x_2$ have opposite signs. Otherwise,
 if both $x_1$ and $x_2$ have the same signs,  then the product $x_1x_2$ is positive. Thus the first inequality  above gives
 $$y_1 y_2>x_1 x_2>0,$$  so $y_1$ and $y_2$ also have the same signs. But the second equality  above  gives
$$\frac{x_1}{x_2}=-\frac{y_1}{y_2}.$$
The left-hand side of the above equality is positive but the right-hand side is negative. We get
 a contradiction to complete the proof of Theorem \ref{(z)+az2+bz+c}.
\end{proof}

It was shown in \cite[Theorem 3.1]{ZZ} that $\sigma(T_{h})=\mathrm{clos}[h(\mathbb D)]$ is a solid ellipse if $h(z)=\overline{z}+(az+b) \ (a, b\in \mathbb C)$. Thus, Theorem  \ref{(z)+az2+bz+c} is true for any polynomial $p$ with degree at most $2$. From the proof of Theorem  \ref{(z)+az2+bz+c}, we get the following two corollaries immediately.

\begin{cor}\label{invertibility(z)+az2+bz+c}
Let $\varphi(z)=\overline{z}+(az^2+bz+c)$, where $a, b$ and  $c$ are all complex constants.  Then the Toeplitz operator $T_\varphi$ is invertible if and only if the cubic polynomial
$az^3+bz^2+cz+1$
has a unique zero in the unit disk $\mathbb D$ with multiplicity 1,  but does not have any zero on the unit circle $\partial \mathbb D$.
\end{cor}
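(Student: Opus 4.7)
The plan is to deduce this corollary directly from Theorem \ref{invertibility}, using only the contradiction argument already developed inside the proof of Theorem \ref{(z)+az2+bz+c}. For $p(z)=az^2+bz+c$ one has $1+zp(z)=az^3+bz^2+cz+1$, so condition (i) of Theorem \ref{invertibility} is literally the ``no zero on $\partial\mathbb D$'' requirement in the corollary. The only nontrivial content is to check that condition (ii) of Theorem \ref{invertibility} collapses, for this class of $p$, to the single requirement that $az^3+bz^2+cz+1$ have exactly one zero in $\mathbb D$ and that this zero be simple.

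For the forward direction I would observe that invertibility of $T_\varphi$ makes it Fredholm of index zero, so Lemma \ref{Fredholm index} forces both $0\notin\varphi(\partial\mathbb D)$ and $\mathrm{wind}(\varphi(\partial\mathbb D),0)=0$. Writing $\varphi(z)=(1+zp(z))/z$ on $\partial\mathbb D$ and applying the argument principle, this winding number equals (number of zeros of $1+zp$ in $\mathbb D$, counted with multiplicity) $-\,1$, so the cubic must have exactly one zero in $\mathbb D$, and that zero is automatically simple. Conversely, the two conditions of the corollary immediately make $T_\varphi$ Fredholm of index zero, so to obtain invertibility it suffices to show that $0\notin\sigma_p(T_\varphi)$, and by Theorem \ref{key lemma} this amounts to ruling out $\alpha^2 p'(\alpha)=\frac{n+2}{n+1}$ for the unique simple zero $\alpha\in\mathbb D$ and every $n\in\mathbb N$.

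To rule out that relation I would reuse the computation in the proof of Theorem \ref{(z)+az2+bz+c} verbatim: factor $az^3+bz^2+cz+1=a(z-\alpha)(z+\beta)(z+\gamma)$ with $|\beta|,|\gamma|>1$, differentiate, evaluate at $\alpha$, and combine with $\alpha p(\alpha)=-1$ and $a\alpha\beta\gamma=-1$ to obtain $(1+\alpha/\beta)(1+\alpha/\gamma)=-1/(n+1)<0$, which contradicts $\mathrm{Re}(1+\alpha/\beta)>0$ and $\mathrm{Re}(1+\alpha/\gamma)>0$ via the observation (\ref{nnegative}). The only possible obstacle here is purely cosmetic: the contradiction was deployed in the earlier proof with the spectral parameter $\lambda$ made explicit, and one must check that nothing there used $\lambda\neq 0$, so the same reasoning applies at $\lambda=0$ and disposes of the extra eigenvalue constraint in condition (ii) of Theorem \ref{invertibility}.
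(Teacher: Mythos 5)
Your proposal is correct and follows essentially the same route as the paper: the forward implication is obtained from Fredholmness and the argument principle, and the converse is reduced (via index $0$ plus $0\notin\sigma_p$) to ruling out $\alpha^2p'(\alpha)=\frac{n+2}{n+1}$, which is disposed of by exactly the factorization-and-winding computation already carried out in the proof of Theorem~\ref{(z)+az2+bz+c}. The only (harmless) cosmetic concern you raise about $\lambda\neq 0$ does not in fact arise, since that earlier argument is already phrased at $\lambda=0$ for an arbitrary constant term $c$.
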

\begin{proof}
If $T_\varphi$ is invertible, then $T_\varphi$ is a Fredholm operator with Fredholm  index $0$. By Lemma \ref{Fredholm index}, since $0$ is not in  $ \varphi(\partial \mathbb D)$ and
$$0=-\mathrm{wind}\big(\varphi(\partial \mathbb D), 0\big)=-\mathrm{wind}\bigg(\frac{az^3+bz^2+cz+1}{z}\bigg|_{\partial \mathbb D},\  0\bigg),$$
  the argument principle implies that $az^3+bz^2+cz+1$ has a unique zero with multiplicity $1$ in the unit disk $\mathbb D$.

Conversely, we assume that the cubic polynomial satisfies the conditions in the corollary, then  $T_\varphi$ is  a Fredholm operator with $\mathrm{index}(T_\varphi)=0$.
We are going to show that $T_\varphi$ is invertible on the Bergman space. If it is not invertible, then there exists a nonzero function $f\in L_a^2$ such that
$T_\varphi f=0$.

Write  $$az^3+bz^2+cz+1=a(z-\alpha)(z+\beta)(z+\gamma),$$
where $|\alpha|<1$, $|\beta|>1$ and  $|\gamma|>1$. Since $f$ is nonzero, the proof of Theorem \ref{(z)+az2+bz+c} shows
$$\mathrm{Re}\Big(1+\frac{\alpha}{\beta}\Big)\cdot \mathrm{Re}\Big(1+\frac{\alpha}{\gamma}\Big)\leqslant 0.$$
 However, Observation $(\ref{nnegative})$ in the proof of Theorem \ref{(z)+az2+bz+c} tells us that  one  of $\frac{\alpha}{\beta}$ and $ \frac{\alpha}{\gamma}$ lies outside  the unit disk. This implies that either $\beta$ or $\gamma$ is inside the unit disk, since $|\alpha|<1$. It is a contradiction.  This finishes the proof of Corollary \ref{invertibility(z)+az2+bz+c}.
\end{proof}

\begin{cor}
Let $\varphi(z)=\overline{z}+(az^2+bz+c)$ with $a, b, c \in \mathbb C$. Then we have
$$\sigma(T_\varphi)\subset \mathrm{clos}[\varphi(\mathbb D)].$$
\end{cor}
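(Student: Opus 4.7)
The plan is to deduce this corollary directly from Theorem \ref{(z)+az2+bz+c}, which gives the explicit description
\[
\sigma(T_\varphi)=\varphi(\partial \mathbb D)\cup \big\{\lambda\in\mathbb C:\lambda\notin\varphi(\partial\mathbb D)\text{ and }\mathrm{wind}(\varphi(\partial\mathbb D),\lambda)\neq 0\big\}.
\]
So it suffices to show that both pieces on the right lie in $\mathrm{clos}[\varphi(\mathbb D)]$.

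For the first piece, I would observe that $\varphi$ is continuous on the closed disk, so $\varphi(\overline{\mathbb D})$ is compact (hence closed) and contains $\varphi(\mathbb D)$; therefore $\varphi(\partial \mathbb D)\subset\varphi(\overline{\mathbb D})=\mathrm{clos}[\varphi(\mathbb D)]\cup\varphi(\partial\mathbb D)$, and since any boundary point $\varphi(e^{i\theta})$ is a limit of $\varphi(re^{i\theta})$ as $r\uparrow 1$, it lies in $\mathrm{clos}[\varphi(\mathbb D)]$.

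For the second piece, I would argue by contraposition: suppose $\lambda\notin\mathrm{clos}[\varphi(\mathbb D)]$ and $\lambda\notin\varphi(\partial\mathbb D)$; then $\lambda\notin\varphi(\overline{\mathbb D})$, so the map
\[
\Psi(z)=\frac{\varphi(z)-\lambda}{|\varphi(z)-\lambda|}
\]
is a well-defined continuous map from $\overline{\mathbb D}$ into the unit circle. Since $\overline{\mathbb D}$ is contractible, the restriction $\Psi\big|_{\partial\mathbb D}$ is null-homotopic, which means the closed curve $\varphi(\partial\mathbb D)$ has winding number zero about $\lambda$. This contradicts the membership condition in the second piece, hence every $\lambda$ in that piece must belong to $\mathrm{clos}[\varphi(\mathbb D)]$.

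Combining the two inclusions yields $\sigma(T_\varphi)\subset\mathrm{clos}[\varphi(\mathbb D)]$. There is no real obstacle here; the only substantive point is the winding number argument, which is purely topological and uses nothing beyond continuity of $\varphi$ on $\overline{\mathbb D}$ together with the explicit spectral description from Theorem \ref{(z)+az2+bz+c}.
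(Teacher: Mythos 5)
Your proof is correct, and it takes a genuinely (if mildly) different route from the paper's. You deduce the inclusion directly from the spectral formula established in Theorem~\ref{(z)+az2+bz+c}, splitting $\sigma(T_\varphi)$ into the boundary curve $\varphi(\partial\mathbb D)$ and the set where the winding number is nonzero, and showing each piece lies in $\mathrm{clos}[\varphi(\mathbb D)]$. The paper instead takes $\lambda\notin\mathrm{clos}[\varphi(\mathbb D)]$, runs the same null-homotopy to get winding number zero, then uses the \emph{argument principle} to convert that into the statement that $az^3+bz^2+(c-\lambda)z+1$ has exactly one simple zero in $\mathbb D$, and finally invokes Corollary~\ref{invertibility(z)+az2+bz+c} to conclude $T_{\varphi-\lambda}$ is invertible. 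Both arguments hinge on the same topological observation --- that $\varphi-\lambda$ extends to a nonvanishing continuous map on $\overline{\mathbb D}$ once $\lambda$ is outside $\mathrm{clos}[\varphi(\mathbb D)]$, so its boundary restriction is null-homotopic in $\mathbb C\setminus\{0\}$ --- but your version is slightly more economical: by working from the finished spectral formula rather than the invertibility criterion, you bypass the argument-principle step entirely. One cosmetic remark: the chain $\varphi(\partial\mathbb D)\subset\varphi(\overline{\mathbb D})=\mathrm{clos}[\varphi(\mathbb D)]\cup\varphi(\partial\mathbb D)$ you wrote is circular as a justification and can be dropped; the radial-limit sentence that follows it is the correct and sufficient reason $\varphi(\partial\mathbb D)\subset\mathrm{clos}[\varphi(\mathbb D)]$.
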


\begin{proof}
 Let $\lambda$ be any complex number not in $\mathrm{clos}[\varphi(\mathbb D)]$. Thus we obtain
 $$|\varphi (z)-\lambda |\geqslant \delta >0$$
 for some constant $\delta$ and all $z$ in the unit disk.

Let $$F(e^{\mathrm{i}\theta}, t):=\varphi(te^{\mathrm{i}\theta})-\lambda,  \ \ \ \theta\in [0, 2\pi], \ \ \ t\in [0, 1]. $$
Then we have
$F(e^{\mathrm{i}\theta}, t)\in C\big(\partial\mathbb D \times [0, 1], \mathbb C \backslash\{0\}\big)$ and
$$F(e^{\mathrm{i}\theta}, 0)=\varphi(0)-\lambda, \ \ \  F(e^{\mathrm{i}\theta}, 1)=\varphi(e^{\mathrm{i}\theta})-\lambda, \ \ \  e^{\mathrm{i}\theta}\in  \partial\mathbb D.$$
This shows that the curve $\varphi(\partial \mathbb D)-\lambda$ is  homotopy equivalent to a nonzero point  $\varphi(0)-\lambda $   \big(in $\mathbb C\backslash\{0\}$\big). Thus  we have
$$0=\mathrm{wind}\big(\varphi(\partial \mathbb D)-\lambda, 0\big)=\mathrm{wind}\bigg(\frac{az^3+bz^2+(c-\lambda )z+1}{z}\bigg|_{\partial \mathbb D}, \ 0\bigg).$$
The argument principle gives that the polynomial $$az^3+bz^2+(c-\lambda )z+1$$ has exactly one zero with multiplicity $1$ in $\mathbb D$. By  Corollary \ref{invertibility(z)+az2+bz+c}, we have that $T_{\varphi-\lambda}$ is invertible, and hence $\lambda$ is not in the spectrum $\sigma (T_{\varphi})$. So we obtain
$$\sigma (T_{\varphi}) \subset \mathrm{clos}[\varphi(\mathbb D)],$$
to complete the proof of this corollary.
\end{proof}

\begin{rem} Athough it is true \cite[Theorem 3.1] {ZZ} that for every linear polynomial $p$ and $\varphi(z) =\overline{z}+p(z)$, we have
$$\sigma (T_{\varphi})=\mathrm{clos}[\varphi(\mathbb D)],$$
 the inclusion
 $$\mathrm{clos}[\varphi(\mathbb D)]\subset \sigma(T_\varphi)$$
  does not hold for general harmonic polynomials $$\varphi(z)=\overline{z}+(az^2+bz+c).$$ Indeed, it was shown in   \cite[Theorem 4.1]{ZZ} that the Toeplitz operator $T_{\overline{z}+(z^2-z)}$ is invertible on the Bergman space $L_a^2$, but clearly the harmonic function $\overline{z}+(z^2-z)$ is not invertible in $L^\infty(\mathbb D)$.
\end{rem}

\section{Polynomials with higher degree}

In this section, we mainly study the spectral structure of the Toeplitz operator $T_{\overline{z}+p}$, where $p$ is a polynomial with degree greater than $2$. For each integer $k>2$, we will construct a polynomial $p$ of degree $k$ such that the spectrum of the  Toeplitz operator $T_{\overline{z}+p}$ has at least one isolated point but has at most finitely many isolated points.

The first main result of this section is contained in the following theorem, which provides a general method  to construct Toeplitz operators with harmonic symbols on the Bergman space having disconnected spectra.

\begin{thm}\label{deg>2}
For each integer $k$ greater than $2$, there is a polynomial $p$ with degree $k$ such that the spectrum  $\sigma(T_{\overline{z}+p})$ has an isolated point and hence is disconnected.
\end{thm}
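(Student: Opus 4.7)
The plan is to combine the eigenvalue characterization in Theorem~\ref{key lemma} with Pearcy's spectral picture, Theorem~\ref{spectral picture theorem}. I will exhibit a point $\lambda_0\in\mathbb C$ and a polynomial $p$ of degree $k$ with $\varphi=\overline{z}+p$ such that: (i)~$\lambda_0\notin\varphi(\partial\mathbb D)=\sigma_e(T_\varphi)$; (ii)~$\mathrm{index}(T_\varphi-\lambda_0 I)=0$, so $\lambda_0$ lies in a component $H$ of $\mathbb C\setminus\sigma_e(T_\varphi)$ with vanishing index; (iii)~$\lambda_0$ is an eigenvalue of $T_\varphi$; and (iv)~some other point of $H$ is not in $\sigma(T_\varphi)$. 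With these four facts in hand, cases (a) and (b) of Pearcy's trichotomy are excluded on $H$, forcing case (c): $H\cap\sigma(T_\varphi)$ is a countable set of isolated eigenvalues, and in particular $\lambda_0$ is an isolated point of $\sigma(T_\varphi)$.

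For the construction I aim for $R_{\lambda_0}(z):=1+z[p(z)-\lambda_0]$, a polynomial of degree $k+1$, to factor as
\[
R_{\lambda_0}(z)=C(z-z_0)\prod_{j=1}^k(z-w_j),\qquad z_0\in\mathbb D,\ |w_j|>1,
\]
with $z_0$ a simple zero satisfying the $n=0$ resonance $z_0^2p'(z_0)=2$. Differentiating $R_{\lambda_0}$ at $z_0$ and using $p(z_0)-\lambda_0=-1/z_0$ shows the resonance is equivalent to $z_0R_{\lambda_0}'(z_0)=1$; comparing this with the constant-term condition $R_{\lambda_0}(0)=1$ reduces everything to the single complex equation
\[
\prod_{j=1}^{k}\Bigl(1-\frac{z_0}{w_j}\Bigr)=-1.
\]
For any $k\ge 3$ there are enough free parameters ($2k$ from the $w_j$'s plus $2$ from $z_0$, against only two real equations and open sign constraints) to realize this with all $|w_j|>1$ and $z_0\in\mathbb D$; for instance, fix $z_0$ real and $k-1$ of the $w_j$'s off the real axis, and solve for the last $w_k$. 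The polynomial $p$ is then recovered by $p(z)=[R_{\lambda_0}(z)-1]/z+\lambda_0$, which is a genuine polynomial of degree $k$ because $R_{\lambda_0}(0)=1$ and the leading coefficient $C$ is nonzero, and $\lambda_0$ is determined by $\lambda_0=p(z_0)+1/z_0$. Conditions (i)--(iii) then follow directly from the factorization, from Lemma~\ref{Fredholm index} together with the argument principle (one zero of $R_{\lambda_0}$ in $\mathbb D$ and none on $\partial\mathbb D$ gives index $0$), and from Theorem~\ref{key lemma}.

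For the isolation statement (iv), observe that for $\lambda$ close to $\lambda_0$ the polynomial $R_\lambda(z)=1+z[p(z)-\lambda]$ still has a unique simple zero $z(\lambda)\in\mathbb D$ near $z_0$, depending analytically on $\lambda$ through $\lambda=p(z)+1/z$; the derivative of the right-hand side at $z_0$ is $p'(z_0)-1/z_0^2=1/z_0^2\neq 0$ by the resonance, so the local inverse is biholomorphic. Define $F(\lambda):=z(\lambda)^2p'(z(\lambda))$; it is analytic with $F(\lambda_0)=2$. Since $z^2p'(z)$ is a non-constant polynomial in $z$ (of degree $k+1\ge 4$) and $\lambda\mapsto z(\lambda)$ is a local biholomorphism, $F$ is non-constant in $\lambda$, so $F^{-1}(2)$ is a discrete set near $\lambda_0$. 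Shrinking the neighborhood to ensure $|F(\lambda)-2|<1/2$ restricts any achievable resonance value $(n+2)/(n+1)$ to the single value $2$, i.e.\ $n=0$, so by Theorem~\ref{key lemma} the only candidate eigenvalues close to $\lambda_0$ are the isolated zeros of $F-2$. At any other nearby $\lambda$, $T_\varphi-\lambda I$ is Fredholm of index zero with trivial kernel, hence invertible, yielding (iv). The main obstacle is the construction step: one must keep every $w_j$ strictly outside $\overline{\mathbb D}$ while enforcing the product equation $\prod_j(1-z_0/w_j)=-1$, which is tight (but feasible) when $k=3$ and progressively easier for larger $k$; the analytic non-constancy of $F$ underpinning the isolation is essentially free because $p$ is a genuine polynomial rather than a function of the form $-c/z+d$.
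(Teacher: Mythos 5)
Your overall strategy mirrors the paper's: use Theorem~\ref{key lemma} to exhibit an eigenvalue $\lambda_0$ lying in a hole $H$ of the essential spectrum with vanishing index, then invoke Pearcy's trichotomy (Theorem~\ref{spectral picture theorem}) to conclude $\lambda_0$ is isolated. The structural steps --- factoring $R_{\lambda_0}(z)=1+z[p(z)-\lambda_0]=C(z-z_0)\prod_j(z-w_j)$ with $z_0\in\mathbb D$ and all $w_j$ outside $\overline{\mathbb D}$, then reducing the $n=0$ resonance to the product equation --- are the same framework the paper uses. Your step (iv), showing via the analytic function $F(\lambda)=z(\lambda)^2 p'(z(\lambda))$ that all but a discrete set of nearby points in the hole are resolvent points, is a local (and somewhat cleaner) alternative to the paper's observation that the relevant index-zero eigenvalue set is countable; either suffices to rule out case (b) of Pearcy.

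The gap is in the construction. Fixing $n=0$ forces the product equation $\prod_{j=1}^{k}(1-z_0/w_j)=-1$, and you assert it is ``tight but feasible'' at $k=3$. It is in fact \emph{infeasible} at $k=3$. Each factor $\zeta_j:=1-z_0/w_j$ lies strictly inside the open disk $\{|\zeta-1|<1\}$ (since $|z_0/w_j|<1$), so writing $\zeta_j=r_j e^{\mathrm{i}\theta_j}$ one has $\theta_j\in(-\pi/2,\pi/2)$ and $r_j<2\cos\theta_j$. For the product to equal $-1$ the arguments must sum to $\pm\pi$; with $k=3$ this forces all three $\theta_j>0$ and $\theta_1+\theta_2+\theta_3=\pi$ (after conjugating if necessary). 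By the concavity of $\log\cos$,
$$\prod_{j=1}^{3}2\cos\theta_j\leqslant\big(2\cos(\pi/3)\big)^3=1,$$
so $\prod_j r_j<1$ strictly, while the modulus condition demands $\prod_j r_j=1$. This is precisely the obstruction embodied in Observation~(\ref{nnegative}) that forces connectedness for $k\leqslant 2$; the case $k=3$, $n=0$ sits at the boundary of that obstruction and still fails. The paper sidesteps it by taking $n\geqslant 1$, so the right-hand side becomes $-\frac{1}{n+1}$ with modulus strictly below $1$, which creates room. You should do the same, aiming for $z_0^2p'(z_0)=\frac{3}{2}$ rather than $2$.

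Separately, the ``enough free parameters'' count does not by itself establish existence even for $k\geqslant 4$: the identical count is available at $k=2$, where the construction is provably impossible. One must exhibit a concrete solution with every $|w_j|>1$ verified. The paper does this by taking all $w_j=-\beta$ equal and choosing $\beta$ via an explicit $(k+1)$-th root, then carrying out a modulus estimate to confirm $|\beta|>1$; something comparably explicit is needed in your argument.
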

\begin{proof}
Let $k$ and $n$ be two integers such that $k\geqslant 3$ and $n \geqslant 1$. Let $\beta$ be the complex number defined by: $$\beta=\bigg[1-\Big(\frac{1}{n+1}\Big)^{\frac{1}{k}}e^{\frac{\pi}{k}\mathrm{i}}\bigg]^{-\frac{1}{k+1}}.$$ First we  show $|\beta|>1$. To do so, using a simple computation we have
\begin{align*}
\bigg|1-\Big(\frac{1}{n+1}\Big)^{\frac{1}{k}}e^{\frac{\pi}{k}\mathrm{i}}\bigg|^2 &=1-2\Big(\frac{1}{n+1}\Big)^{\frac{1}{k}} \cos\Big(\frac{\pi}{k}\Big)+\Big(\frac{1}{n+1}\Big)^\frac{2}{k}\\
&=1-\Big[2^k\Big(\frac{1}{n+1}\Big) \cos^k\Big(\frac{\pi}{k}\Big)\Big]^{\frac{1}{k}}+\Big(\frac{1}{n+1}\Big)^\frac{2}{k}\\
&\leqslant  1-\Big[2^k\Big(\frac{1}{n+1}\Big) \cos^k\Big(\frac{\pi}{3}\Big)\Big]^{\frac{1}{k}}+\Big(\frac{1}{n+1}\Big)^\frac{2}{k}\\
&=1-\Big(\frac{1}{n+1}\Big)^{\frac{1}{k}}+\Big(\frac{1}{n+1}\Big)^\frac{2}{k}\\
&<1-\Big(\frac{1}{n+1}\Big)^{\frac{1}{k}}+\Big(\frac{1}{n+1}\Big)^\frac{1}{k}=1
\end{align*}
for $k\geqslant 3$ and $n\geqslant 1$. The first inequality follows from that the function $\cos(t)$ is decreasing for positive $t$ nearby $0$.

Letting $\alpha=-\beta^{-k}$, then we have  $|\alpha|<1$.  Then we can define a class of polynomials $p$ such that $\sigma(T_{\overline{z}+p})$ is disconnected as follows:
$$p(z)=\frac{(z-\alpha)(z+\beta)^k-1}{z}.$$
Clearly, the degree of the polynomial $p$ is $k$.
We will show that $0$ is an isolated point in  $\sigma(T_{\overline{z}+p})$. To do so, we first show that $0$ is an eigenvalue of $T_{\overline{z}+p}$.  As in the proof of Theorem \ref{key lemma}, simple
calculations yield
$$\alpha^2 p'(\alpha)=\frac{n+2}{n+1}.$$
 By Theorem \ref{key lemma}, we have that $0$ is an eigenvalue  of $T_{\overline{z}+p}$.

Next we will show that $0$ is  an isolated point of $\sigma(T_{\overline{z}+p})$. Noting that
$$\overline{z}+p(z)=\frac{1+zp(z)}{z}=\frac{(z-\alpha)(z+\beta)^k}{z}$$
on the unit  circle, we have that  $0$ is not in $ \sigma_e(T_{\overline{z}+p})$. Since  $\alpha$ is  the unique zero of $1+zp(z)$ in the unit disk,  we have $$\mathrm{index}(T_{\overline{z}+p})=0,$$ and hence $0$ is in the intersection
\begin{align*}
\Lambda:=\sigma_p (T_{\overline{z}+p})\bigcap \Big\{\lambda \in \mathbb C: \lambda \notin \sigma_e(T_{\overline{z}+p})\ \mathrm{and}\ \mathrm{index}(T_{\overline{z}+p}-\lambda I)=0\Big\}.
\end{align*}

Suppose that $\lambda$ belongs to $\Lambda$, then we have by the argument principle that there exists a $z_\lambda\in \mathbb D$ such that
$$1+z_\lambda[p(z_\lambda)-\lambda]=0.$$
Moreover, since $\lambda$ is an eigenvalue of $T_{\overline{z}+p}$, the proof of Theorem \ref{key lemma} implies that
$$z_\lambda^2p'(z_\lambda)=\frac{n+2}{n+1}$$
for some integer $n\geqslant 0$. Therefore, $\Lambda$ is contained  in the following countable set:
$$ \bigg\{\lambda \in \mathbb C: z_\lambda \ \text{is\ the \ root \ of}\   \ \lambda=\frac{1}{z}+p(z)\ \text{in}\  \mathbb D \ \ \mathrm{and}\ \  z_\lambda^2p'(z_\lambda)=\frac{n+2}{n+1} \mathrm{\ for\  some\ } n\in \mathbb N \bigg\},$$
which is a discrete set and hence is not connected.
 By Theorem \ref{spectral picture theorem}, we conclude that the eigenvalue $0$ is an isolated point in $\sigma(T_{\overline{z}+p})$. Consequently, the spectrum of  the Toeplitz operator $T_{\overline{z}+p}$ is disconnected, to complete the proof of Theorem \ref{deg>2}.
\end{proof}

An interesting  problem in the study of the Topelitz operator theory is to characterize the spectral structure of such operators. However, as mentioned in Section 1, people know very little about this problem. Although we have shown in the previous theorem that there exists a class of Toeliptz operators with harmonic polynomial symbols such that they have  disconnected spectra, it is natural to study more about the topological structure for the spectra of this class of Toeplitz operators. As the isolated points in the spectrum of an operator can be used for producing hyperinvariant subspaces (which was mentioned in Section 1) and for studying Weyl type theorems, we hope to further discuss the isolated points in the spectrum of the Toeplitz operator constructed in Theorem \ref{deg>2}.

 In fact, we will show in the next theorem that there are at most finitely  many isolated points in $\sigma(T_{\overline{z}+p})$ for each polynomial $p$ defined in the above theorem, which suggests that there can not be too many isolated points in the spectra of such class of operators. For this purpose, we need to estimate the number of zeros of certain  high-degree polynomials. In order to analyze the zero distribution of complex polynomials, we need the following lemma, which is known as ``the continuous dependence of the zeros of a polynomial on its coefficients", see \cite[appendix A]{Ost} or \cite[Theorem 1.3.1]{RS} if necessary.

\begin{lemma}\label{continuous dependence}
Let $$p(z)=z^n+a_{n-1}z^{n-1}+\cdots+a_1 z+a_0$$
be a monic polynomial with complex coefficients. Then, for every small $\epsilon>0$, there exists a $\tau>0$ such that for any polynomial
$$q(z)=z^n+b_{n-1}z^{n-1}+\cdots+b_1z+b_0$$
satisfying $$\max\limits_{0\leqslant k\leqslant n-1}|a_k-b_k|<\tau,$$
the zeros $z_j$ and $w_j$ of $p$ and $q$, respectively, can be ordered in such a way that
we have $$|z_j-w_j|<\epsilon  \ \ \  \big(j=1, 2, \cdots, n\big).$$
\end{lemma}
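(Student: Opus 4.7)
The plan is a standard Rouché-type localization argument. First I would group the zeros of $p$ by distinctness as $\zeta_1, \ldots, \zeta_s$ with multiplicities $m_1, \ldots, m_s$ summing to $n$. Given $\epsilon > 0$, I may shrink it if necessary so that the open disks $D(\zeta_i, \epsilon)$ are pairwise disjoint. On each boundary circle $\partial D(\zeta_i, \epsilon)$ the polynomial $p$ has no zero, so by continuity and compactness there is a uniform lower bound $|p(z)| \geqslant \delta > 0$ on the union $\bigcup_{i=1}^s \partial D(\zeta_i, \epsilon)$.

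Next I would pick $R > 0$ large enough that all the closed disks $\overline{D(\zeta_i, \epsilon)}$ lie inside $\overline{D(0, R)}$. For $|z| \leqslant R$ the coefficient-wise hypothesis gives the elementary bound
$$|p(z) - q(z)| \leqslant \sum_{k=0}^{n-1} |a_k - b_k|\, |z|^k \leqslant \tau \sum_{k=0}^{n-1} R^k.$$
Choosing $\tau$ small enough that the right-hand side is strictly less than $\delta$ forces $|p(z) - q(z)| < |p(z)|$ on every boundary circle $\partial D(\zeta_i, \epsilon)$. Rouch\'e's theorem then tells me that $p$ and $q$ have the same number of zeros inside each $D(\zeta_i, \epsilon)$, namely $m_i$.

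Finally, since the $s$ open disks are pairwise disjoint and together account for $m_1 + \cdots + m_s = n$ zeros of $q$, and $q$ is itself monic of degree $n$, every zero of $q$ lies in exactly one of these disks. Pairing the $m_i$ copies of $\zeta_i$ (as zeros of $p$) with the $m_i$ zeros of $q$ in $D(\zeta_i, \epsilon)$ yields the required ordering $z_j \leftrightarrow w_j$ with $|z_j - w_j| < \epsilon$. The only mild obstacle is to arrange, after $\epsilon$ has been shrunk to separate the $\zeta_i$, a single $\tau$ that works simultaneously on all $s$ boundary circles; but as noted, the crude uniform estimate $\tau \sum_{k=0}^{n-1} R^k < \delta$ settles this in one shot, so the whole argument is essentially a direct repackaging of Rouch\'e's theorem in the language of coefficients.
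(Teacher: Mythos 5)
Your Rouché-based argument is correct and complete. Note that the paper does not actually prove this lemma---it cites it as known, pointing to Ostrowski's book (appendix A) and Rahman--Schmeisser (Theorem 1.3.1)---and the localization-by-Rouché argument you give is precisely the standard proof found in those references, so there is nothing further to reconcile.
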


Another main result of this section  is contained in the next theorem. For convenience, we only consider the case of $n=1$ in Theorem \ref{deg>2}.

\begin{thm}\label{deg=k}
Fix an integer $k\geqslant 3$. Suppose that $\beta$ is such that
$$\beta^{k+1}=\frac{1}{1-(\frac{1}{2})^{\frac{1}{k}}e^{\frac{\pi}{k}\mathrm{i}}},$$
and let $\alpha =-\beta^{-k}.$
Let $p$  be the polynomial with degree $k$ as the following:
$$p(z)=\frac{(z-\alpha)(z+\beta)^k-1}{z}.$$
Then $0$ is an isolated point in  $\sigma(T_{\overline{z}+p})$ and $\sigma(T_{\overline{z}+p})$ has at most finitely many isolated points.
 \end{thm}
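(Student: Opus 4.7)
\emph{Plan.} The first assertion is immediate from Theorem \ref{deg>2} applied with $n=1$: the specified $\alpha,\beta$ are precisely those produced there. For the second assertion, set $\lambda(z):=\frac{(z-\alpha)(z+\beta)^k}{z}=\frac{1+zp(z)}{z}$, so that $\sigma_e(T_{\overline z+p})=\lambda(\partial\mathbb{D})$ and, for $\mu\notin\sigma_e(T_{\overline z+p})$, the Fredholm index of $T_{\overline z+p}-\mu I$ equals $1$ minus the number of preimages of $\mu$ in $\mathbb{D}$ under $\lambda$ (Lemma \ref{Fredholm index} and the argument principle). Since components of $\mathbb{C}\setminus\sigma_e(T_{\overline z+p})$ with nonzero Fredholm index lie entirely in $\sigma(T_{\overline z+p})$, Pearcy's theorem (Theorem \ref{spectral picture theorem}) implies that every isolated point of $\sigma(T_{\overline z+p})$ lies in an index-zero component and is an eigenvalue. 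Combining this with Theorem \ref{key lemma}, each isolated point must have the form $\mu=\lambda(z)$ where $z\in\mathbb{D}$ is the unique preimage of $\mu$ in $\mathbb{D}$ and $z^2 p'(z)=\frac{n+2}{n+1}$ for some $n\in\mathbb{N}$.

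Next I locate the critical points of $\lambda$ in $\overline{\mathbb{D}}$. A direct differentiation gives $z^2 p'(z)-1=(z+\beta)^{k-1}\bigl(kz^2-(k-1)\alpha z+\alpha\beta\bigr)$. Since $|\beta|>1$, the factor $(z+\beta)^{k-1}$ is nonzero on $\overline{\mathbb{D}}$. Writing $Q_2(z):=kz^2-(k-1)\alpha z+\alpha\beta$, on $|z|=1$ the triangle inequality yields $|Q_2(z)|\geq k-(k-1)|\alpha|-|\alpha\beta|=k-|\beta|^{-k}(k-1+|\beta|)$, which is strictly positive thanks to the elementary inequality $kr^k>k-1+r$ for $r>1$ (the function $f(r)=kr^k-r-(k-1)$ satisfies $f(1)=0$ and $f'(r)=k^2 r^{k-1}-1>0$ for $r\geq 1$). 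Therefore every zero of $z^2 p'(z)-1$ in $\overline{\mathbb{D}}$ lies in the open disk $\mathbb{D}$.

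Now suppose for contradiction that there are infinitely many isolated points $\mu_j=\lambda(z_j)$, corresponding to pairs $(z_j,n_j)$ with $z_j^2 p'(z_j)=\frac{n_j+2}{n_j+1}$ and $z_j$ the unique preimage of $\mu_j$ in $\mathbb{D}$. Since for each fixed $n$ the polynomial equation $z^2 p'(z)=\frac{n+2}{n+1}$ has at most $k+1$ roots, after passing to a subsequence $n_j\to\infty$. By Lemma \ref{continuous dependence} applied to the polynomials $z^2 p'(z)-\frac{n+2}{n+1}$ (whose coefficients converge to those of $z^2 p'(z)-1$), we may pass to a further subsequence so that $z_j\to z_*$ for some root $z_*$ of $z^2 p'(z)-1$; by the previous paragraph, $z_*\in\mathbb{D}$. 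Since $z_*\neq 0$, $z_*$ is a critical point of the rational function $\lambda$ inside $\mathbb{D}$, so $\lambda$ is locally at least $2$-to-$1$ near $z_*$: for each sufficiently large $j$, the value $\mu_j=\lambda(z_j)\neq\lambda(z_*)$ has a second preimage $z'_j\in\mathbb{D}$ near $z_*$, contradicting the uniqueness of $z_j$ as preimage of $\mu_j$ in $\mathbb{D}$.

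\emph{Principal obstacle.} The conceptual heart of the argument is the final contradiction, which turns an apparent accumulation of the $z_j$'s at an interior critical point of $\lambda$ into a violation of the ``unique preimage'' condition (equivalently, the Fredholm-index-zero condition) required of any isolated spectral point. The supporting technical step --- ruling out critical points of $\lambda$ on $\partial\mathbb{D}$, so that the accumulation $z_*$ is forced into $\mathbb{D}$ rather than onto its boundary --- relies on the elementary but specific inequality $kr^k>k-1+r$ for $r>1$, which is precisely the place the algebraic structure of $\alpha=-\beta^{-k}$ and $|\beta|>1$ enters.
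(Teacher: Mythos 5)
Your proof is correct and reaches the same conclusion as the paper, but via a genuinely cleaner route; let me contrast the two.

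The paper's proof devotes most of its length (the auxiliary Lemma \ref{at least two zeros} and inequalities (\ref{k1})--(\ref{estimate2})) to establishing explicit numerical bounds $\frac{1}{2k|\beta|^k} \leqslant |z_\infty|, |w_\infty| < \frac{49}{50}$ on the two critical points of the rational map $\lambda(z)=\frac1z+p(z)$. These quantitative bounds are then fed, via Lemma \ref{continuous dependence} applied a second time to the family $F_n(z)=1+z[p(z)-\lambda_n]$, into a direct verification that $F_n$ has at least two zeros of modulus strictly less than $1$ once $n$ exceeds some $N$. Your argument avoids this entirely. You observe that the factorization $z^2p'(z)-1=(z+\beta)^{k-1}Q_2(z)$ with $Q_2(z)=kz^2-(k-1)\alpha z+\alpha\beta$ makes it a one-line triangle-inequality computation (plus the elementary inequality $kr^k>k-1+r$ for $r>1$, $k\geqslant 3$) to show that $z^2p'(z)-1$ does not vanish on $\partial\mathbb{D}$. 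You never need to decide whether the critical points lie in $\mathbb{D}$ or outside $\overline{\mathbb{D}}$, nor how far from the boundary; the compactness argument (accumulation of the $z_j$ inside $\overline{\mathbb{D}}$ at a critical point $z_*$, which the boundary exclusion then places in the open disk) does that work automatically. The final contradiction is then not a zero count as in the paper but the local mapping property of $\lambda$ at an interior critical point: $\lambda$ is at least $2$-to-$1$ near $z_*$, violating the ``unique preimage in $\mathbb{D}$'' (index-zero) condition. This is the same phenomenon the paper exploits, but packaged more conceptually and with far less bookkeeping.

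One small point worth tightening: in the last step you assert $\mu_j=\lambda(z_j)\neq\lambda(z_*)$ without justification. This need not hold a priori, but the case $\mu_j=\lambda(z_*)$ is in fact even easier: then $z_*$ itself is a multiple zero of $1+z[p(z)-\mu_j]$ in $\mathbb{D}$, already contradicting uniqueness (the winding number is at least $1$). So the dichotomy ``$\mu_j=\lambda(z_*)$ or $\mu_j\neq\lambda(z_*)$'' should be stated, with both branches yielding the contradiction. With that one sentence added, the proof is complete.

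Also note, as a matter of bookkeeping, that your initial reduction (that every isolated point of $\sigma(T_{\overline z+p})$ is an eigenvalue with a unique preimage in $\mathbb{D}$) implicitly uses that $\sigma_e$ is a connected set with more than one point, so it contributes no isolated points of $\sigma$, and that the unbounded component of the complement of $\sigma_e$ has index zero. These are routine, and the paper makes the same reduction, but they should at least be acknowledged.
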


In order to prove  Theorem \ref{deg=k}, we need one more lemma.
\begin{lemma}\label{at least two zeros}
 Let $p$ be the polynomial with degree $k\geqslant 3$ defined in Theorem \ref{deg=k}. There is an $N\in \mathbb N$ such that for each integer $n>N$ and for each solution $w$ (in the disk $\mathbb D$) of the following equation:
  \begin{equation*}
  z^2p'(z)=\frac{n+2}{n+1},
  \end{equation*}
 setting $$\lambda =\frac{1}{w}+p(w), $$ the polynomial $1+z[p(z)-\lambda]$ has at least two zeros  (counting multiplicities)
  in the open unit disk.
  \end{lemma}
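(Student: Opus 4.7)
The plan is to reduce everything to the polynomial $q(z) := (z-\alpha)(z+\beta)^k$ of degree $k+1$. I would begin by establishing three identities that I expect to use repeatedly. First, since $zp(z) = q(z)-1$, the polynomial of interest factors as $1 + z[p(z) - \lambda] = q(z) - \lambda z$. Second, by the definition of $\lambda$, $\lambda w = 1 + wp(w) = q(w)$, i.e.\ $\lambda = q(w)/w$. Third, differentiating $zp(z) = q(z)-1$ gives $z^2 p'(z) = zq'(z) - q(z) + 1$, so the defining equation becomes $zq'(z) - q(z) = \tfrac{1}{n+1}$. A direct expansion yields the factorization
\[
zq'(z) - q(z) = (z+\beta)^{k-1}\, Q(z),\qquad Q(z) := kz^2 - (k-1)\alpha z + \alpha\beta.
\]

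Next I would verify that $Q$ has no zeros on $\partial \mathbb D$. The triangle inequality gives $|Q(e^{\mathrm{i}\theta})| \geq k - (k-1)|\beta|^{-k} - |\beta|^{1-k}$, and since $|\beta| > 1$ and $k \geq 3$ we have both $(k-1)|\beta|^{-k} < k-1$ and $|\beta|^{1-k} < 1$, so this lower bound is strictly positive. I would then apply Lemma \ref{continuous dependence} (after normalising to a monic polynomial) to $(z+\beta)^{k-1}Q(z) - \tfrac{1}{n+1}$: as $n \to \infty$ its $k+1$ roots converge to the roots of $(z+\beta)^{k-1}Q(z)$, namely $-\beta$ with multiplicity $k-1$ (lying outside $\overline{\mathbb D}$) together with the two roots of $Q$ (none on $\partial \mathbb D$). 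Consequently, for every sufficiently large $n$, any solution $w \in \mathbb D$ of the equation must lie arbitrarily close to some root $w_0$ of $Q$, and the absence of roots of $Q$ on $\partial \mathbb D$ forces $w_0 \in \mathbb D$ strictly.

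I would then fix such a $w_0$ and set $\lambda_0 := q(w_0)/w_0$. The identity $Q(w_0) = 0$ combined with $w_0 \neq -\beta$ means $w_0 q'(w_0) - q(w_0) = 0$, which rearranges to $q'(w_0) = \lambda_0$. Hence $w_0$ is a zero of multiplicity at least two of $q(z) - \lambda_0 z$, since both the polynomial and its derivative vanish there. By continuity of $q(z)/z$ at $w_0$, the value $\lambda = q(w)/w$ tends to $\lambda_0$ as $n \to \infty$, so a second application of Lemma \ref{continuous dependence} to the monic polynomial $q(z) - \lambda z$ (as a perturbation of $q(z) - \lambda_0 z$) shows that the two roots of $q(z) - \lambda z$ arising from splitting the double root at $w_0$ lie in an arbitrarily small neighbourhood of $w_0$. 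Since $w_0$ is strictly interior to $\mathbb D$, for all $n$ beyond some threshold $N$ both of these roots remain in $\mathbb D$, giving at least two zeros (counting multiplicity) of $1 + z[p(z) - \lambda] = q(z) - \lambda z$ in $\mathbb D$, as required.

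The main obstacle, in my view, is controlling what happens near $\partial \mathbb D$. Without the strict estimate $|Q(e^{\mathrm{i}\theta})| > 0$, the two roots produced by the splitting of the double root could drift across the unit circle under small perturbations of $\lambda$, and the whole counting argument would collapse. The elementary quantitative bound in the second paragraph is thus the crucial input, and it is precisely where the hypothesis $k \geq 3$ and the inequality $|\beta| > 1$ inherited from Theorem \ref{deg>2} are used.
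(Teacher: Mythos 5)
Your proof is correct, and while the overall architecture matches the paper's --- both proofs reduce to the factorisation $zq'(z)-q(z)=(z+\beta)^{k-1}Q(z)$ with $Q(z)=kz^2-(k-1)\alpha z+\alpha\beta$, both identify the double root of $q(z)-\lambda_0 z$ at a zero of $Q$, and both invoke Lemma~\ref{continuous dependence} twice in the same places --- the way you handle the crucial "roots stay away from $\partial\mathbb D$" step is genuinely different and substantially cleaner. The paper solves the quadratic $Q$ explicitly, producing $z_\infty$ and $w_\infty$ from the quadratic formula, and then spends roughly a page of calculus (introducing the auxiliary functions $A(x)$ and $B(x)$ and case-splitting on $k=3,4,5$ versus $k\geqslant 6$) to pin down $c_k\leqslant|z_\infty|,|w_\infty|<49/50$. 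You instead never solve $Q$; you observe via the reverse triangle inequality that
$$\bigl|Q(e^{\mathrm{i}\theta})\bigr|\;\geqslant\; k-(k-1)|\beta|^{-k}-|\beta|^{1-k}\;>\;k-(k-1)-1\;=\;0$$
because $|\beta|>1$, so $Q$ has no zeros on $\partial\mathbb D$, and then let the soft compactness in Lemma~\ref{continuous dependence} do the rest: any $w\in\mathbb D$ solving $z^2p'(z)=\tfrac{n+2}{n+1}$ for large $n$ must sit near a root of $Q$, and that root cannot lie on or outside the circle. This avoids entirely the explicit numerical bounds and works uniformly in $k$. The one thing your version does not deliver --- and the paper's explicit estimates do --- is the actual location of the roots of $Q$ inside $\mathbb D$; but for the statement of this lemma (which is conditional on a solution $w\in\mathbb D$ existing, and is vacuously satisfied otherwise) that information is not needed, and indeed the paper only uses it indirectly. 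The only small cosmetic gap is that you never note $w_0\neq 0$, which is needed to form $\lambda_0=q(w_0)/w_0$; this follows from $Q(0)=\alpha\beta=-\beta^{1-k}\neq 0$, a one-line check worth inserting.
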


\begin{proof} The proof of this lemma will be done in several steps. First we show that the equation $z^2p'(z)=1$ has exactly two distinct  roots in the  open unit disk $\mathbb D$. From the assumption, we have by the definition of $p$ that $$1+zp(z)=(z-\alpha)(z+\beta)^k.$$ Taking derivatives of both sides of the above equality and then multiplying both sides by $z$ give  us that
$$zp(z)+z^2p'(z)=z(z+\beta)^k+kz(z-\alpha)(z+\beta)^{k-1}.$$
This implies
\begin{align*}
z^2p'(z)-1&=z^2p'(z)-(z-\alpha)(z+\beta)^k+zp(z)\\
&=z(z+\beta)^k+kz(z-\alpha)(z+\beta)^{k-1}-(z-\alpha)(z+\beta)^k\\
&=(z+\beta)^{k-1}\big[kz^2+\alpha(1-k)z+\alpha \beta\big]\\
&=(z+\beta)^{k-1}\Big(kz^2+\frac{k-1}{\beta^{k}}z-\frac{1}{\beta^{k-1}}\Big),
\end{align*}
 where the last equality follows from $\alpha=-\beta^{-k}$. Thus  $z^2p'(z)=1$ has a  multiple root $-\beta$ and other two complex roots  are given by
  $$z_{\infty}=-\frac{\sqrt{(k-1)^2+4k\beta^{k+1}}+(k-1)}{2k\beta^k}$$
  and
 $$w_{\infty}=\frac{\sqrt{(k-1)^2+4k\beta^{k+1}}-(k-1)}{2k\beta^k}.$$
 Clearly, $z_\infty$ is not equal to $w_\infty$.

 Before going further, we first show that $z_\infty$ and $w_\infty$ are both in $\mathbb D$. To do so, we need some routine computations.
Noting that
  \begin{equation}\label{beta1}
  \begin{split}
\frac{1}{\beta^{k+1}}&=1-\Big(\frac{1}{2}\Big)^{\frac{1}{k}}e^{\frac{\pi}{k}\mathrm{i}}\\
&=1-\Big(\frac{1}{2}\Big)^{\frac{1}{k}}\cos\Big(\frac{\pi}{k}\Big)-\mathrm{i}\Big(\frac{1}{2}\Big)^{\frac{1}{k}}\sin\Big(\frac{\pi}{k}\Big),
\end{split}
\end{equation}
we obtain
\begin{equation}\label{beta2}
  \begin{split}
\frac{1}{|\beta|^k}&=\Big|1-\Big(\frac{1}{2}\Big)^{\frac{1}{k}}e^{\frac{\pi}{k}\mathrm{i}}\Big|^{\frac{k}{k+1}}=\bigg(\Big|1-\Big(\frac{1}{2}\Big)^{\frac{1}{k}}e^{\frac{\pi}{k}\mathrm{i}}\Big|^2\bigg)^{\frac{k}{2(k+1)}}\\
&=\bigg[1-2^{1-k}\cos\Big(\frac{\pi}{k}\Big)+4^{-k}\bigg]^{\frac{k}{2(k+1)}}
  \end{split}
\end{equation}
for $k\geqslant 3$. In order to estimate $|z_\infty|$, $|w_\infty|$ and $|\beta|^{-k}$,  we need  the following two real-valued functions on the interval  $\big(0,\frac{1}{3}\big]$:
$$A(x)=\frac{1-2^{-x}\cos(\pi x)}{1-2^{1-x}\cos(\pi x)+4^{-x}}-\Big(x+\frac{1}{20x}\Big)$$
and
$$B(x)=\Big[1-2^{1-x}\cos(\pi x)+4^{-x}\Big]\Big(2x+\frac{5}{16x}\Big)^2.$$
Using Taylor's series for $\cos(t)$ with $t\in \big(0, \frac{\pi}{3}\big]$, we can show by standard calculus  that
$$A(x)\geqslant \frac{1}{10} \ \ \ \ \ \ \mathrm{and} \ \ \ \ \ \  1\leqslant B(x) \leqslant \frac{5}{2}$$
for all $0<x\leqslant \frac{1}{3}$. This yields
\begin{align}\label{k1}
\frac{1-(\frac{1}{2})^{\frac{1}{k}}\cos(\frac{\pi}{k})}{\big|1-\big(\frac{1}{2}\big)^{\frac{1}{k}}e^{\frac{\pi}{k}\mathrm{i}}\big|^2}\geqslant \frac{k}{20}+\frac{1}{k}
\end{align}
and
\begin{align}\label{k2}
\frac{2}{5}\Big(\frac{5k}{16}+\frac{2}{k}\Big)^2\leqslant \Big|1-\big(\frac{1}{2}\big)^{\frac{1}{k}}e^{\frac{\pi}{k}\mathrm{i}}\Big|^{-2}\leqslant \Big(\frac{5k}{16}+\frac{2}{k}\Big)^2
\end{align}
for all $k\geqslant 3$. Combining  (\ref{beta1}) with (\ref{k1}) and (\ref{k2})  gives
\begin{equation}\label{estimate1}
\begin{split}
\bigg|\Big(\frac{k-1}{k}\Big)^2+\frac{4\beta^{k+1}}{k}\bigg|^{\frac{1}{2}}&=\mathlarger{\bigg|\bigg(\frac{k-1}{k}\bigg)^2+\frac{4}{k}\bigg( \frac{1-(\frac{1}{2})^{\frac{1}{k}}\cos(\frac{\pi}{k})+\mathrm{i}(\frac{1}{2})^{\frac{1}{k}}\sin(\frac{\pi}{k})}{\big|1-\big(\frac{1}{2}\big)^{\frac{1}{k}}e^{\frac{\pi}{k}\mathrm{i}}\big|^2}\bigg)\bigg|^{\frac{1}{2}}}\\
&\geqslant \sqrt{\bigg(\frac{k-1}{k}\bigg)^2+\frac{4}{k}\left( \frac{1-(\frac{1}{2})^{\frac{1}{k}}\cos(\frac{\pi}{k})}{\big|1-\big(\frac{1}{2}\big)^{\frac{1}{k}}e^{\frac{\pi}{k}\mathrm{i}}\big|^2}\right)}\\
&\geqslant \sqrt{\bigg(\frac{k-1}{k}\bigg)^2+\frac{4}{k}\bigg(\frac{k}{20}+\frac{1}{k}\bigg)}\\
&=\sqrt{1+\frac{(k-5)^2}{5k^2}}\geqslant 1
\end{split}
\end{equation}
for all $k\geqslant 3$ and
\begin{equation}\label{estimate2}
\begin{split}
\bigg|\Big(\frac{k-1}{k}\Big)^2+\frac{4\beta^{k+1}}{k}\bigg|^{\frac{1}{2}}&=\mathlarger{\left|\Big(\frac{k-1}{k}\Big)^2+\frac{4}{k}\left( \frac{1}{1-\big(\frac{1}{2}\big)^{\frac{1}{k}}e^{\frac{\pi}{k}\mathrm{i}}}\right)\right|^{\frac{1}{2}}}\\
&\leqslant \sqrt{\Big(\frac{k-1}{k}\Big)^2+\frac{4}{k\big|1-\big(\frac{1}{2}\big)^{\frac{1}{k}}e^{\frac{\pi}{k}\mathrm{i}}\big|}}\\
&\leqslant \sqrt{\Big(\frac{k-1}{k}\Big)^2+\frac{4}{k} \Big(\frac{5k}{16}+\frac{2}{k}\Big)}\\
&=\sqrt{\frac{9}{4}-\frac{2k-9}{k^2}}\leqslant \frac{3}{2}
\end{split}
\end{equation}
for all $k\geqslant 5$, where the last inequality follows from $k\geqslant 5$.

 Thus we derive  by (\ref{estimate1}) that
\begin{align*}
|z_\infty|&  \mathlarger{\geqslant \frac{\Big|\sqrt{(k-1)^2+4k\beta^{k+1}}\Big|}{2k|\beta^k|}-\frac{k-1}{2k|\beta^k|}}\\
& \mathlarger{=\frac{1}{2|\beta|^k} \bigg|\Big(\frac{k-1}{k}\Big)^2+\frac{4\beta^{k+1}}{k}\bigg|^{\frac{1}{2}} -\frac{k-1}{2k|\beta|^k}}\\
&\geqslant \frac{1}{2|\beta|^k}-\frac{k-1}{2k|\beta|^k}=\frac{1}{2k|\beta|^k}
\end{align*}
for all $k\geqslant 3$. On the other hand, using (\ref{beta1}) and  (\ref{estimate2}) we get
 \begin{align*}
|z_\infty|& \mathlarger{\leqslant \frac{1}{2|\beta|^k} \bigg|\Big(\frac{k-1}{k}\Big)^2+\frac{4\beta^{k+1}}{k}\bigg|^{\frac{1}{2}} +\frac{k-1}{2k|\beta|^k}}\\
&\leqslant \frac{3}{4|\beta|^k}+\frac{1}{2|\beta|^k}=\frac{5}{4|\beta|^k}
\end{align*}
for all $k\geqslant 5$.  Moreover,  by (\ref{beta2}), (\ref{k2}) and some elementary calculations we obtain that
\begin{align*}
|\beta|^{-k}&=\bigg(\Big|1-\Big(\frac{1}{2}\Big)^{\frac{1}{k}}e^{\frac{\pi}{k}\mathrm{i}}\Big|^2\bigg)^{\frac{k}{2(k+1)}}\leqslant \bigg[\frac{5}{2}\Big(\frac{5k}{16}+\frac{2}{k}\Big)^{-2}\bigg]^{\frac{k}{2(k+1)}}\leqslant \frac{39}{50}
\end{align*}
if $k\geqslant 6$, which gives us that
\begin{align*}
|z_\infty|\leqslant \frac{5}{4|\beta|^k}\leqslant \frac{5}{4}\times\frac{39}{50}=\frac{39}{40} \ \ \ \ \ \ \big(k\geqslant 6\big).
\end{align*}

For $k=3$ or $k=4$ or $k=5$, by (\ref{beta2}) we can compute directly that  $|z_\infty|< \frac{49}{50}$ in these three cases.
To summarize, we get the following estimate for $|z_\infty|$:
$$\frac{1}{2k|\beta|^k} \leqslant |z_\infty|< \frac{49}{50}$$
when $k\geqslant 3$. Observing that the above estimations  are also valid for $|w_\infty|$, so we obtain
$$\frac{1}{2k|\beta|^k} \leqslant |w_\infty| <\frac{49}{50} \ \ \ \ \ \ \big(k\geqslant 3\big). $$

For simplicity we denote the positive constant $\frac{1}{2k|\beta|^k}$ by $c_k$ for $k\geqslant 3$, then we conclude by the above arguments that
\begin{align*}
0<c_k\leqslant|z_\infty|<\frac{49}{50}<1
\end{align*}
and
\begin{align*}
0<c_k\leqslant|w_\infty|<\frac{49}{50}<1
\end{align*}
for every integer $k\geqslant 3$. Since $z_\infty\neq w_\infty$,  the polynomial $z^2p'(z)-1$ has exactly two different zeros in the unit disk $\mathbb D$ for each fixed integer $k\geqslant 3$.

Let  $$\lambda_\infty=\frac{1}{z_\infty}+p(z_\infty).$$
Next we show that  $z_\infty $ is a multiple zero of the polynomial
\begin{align*}
F(z)&:=1+z[p(z)-\lambda_\infty]\\
&=(z-\alpha)(z+\beta)^k-\lambda_\infty z
\end{align*}
in the unit disk $\mathbb D$. By the definition of $p$ and the fact that $$1=z^2_\infty p'(z_\infty),$$ we get
\begin{equation}\label{lambda infinity}
\begin{split}
\lambda_\infty &=z_\infty p'(z_\infty)+p(z_\infty).
\end{split}
\end{equation}
From the definition of $\lambda_\infty$,  we immediately obtain $$F(z_\infty)=z_{\infty}\Big(\frac{1}{z_\infty}+p(z_\infty)-\lambda_\infty\Big)=0.$$
 In order to show that $z_\infty$ is a  multiple  zero of $F$, we calculate
 \begin{align*}
 F'(z)=p(z)+zp'(z)-\lambda_\infty,
 \end{align*}
 to obtain
 \begin{align*}
 F'(z_\infty)&=p(z_\infty)+z_\infty p'(z_\infty)-\lambda_\infty\\
 &=0,
  \end{align*}
 where the last equality comes from (\ref{lambda infinity}). This gives us that
 $$F(z_\infty)=F'(z_\infty)=0,$$
 so $z_\infty\in \mathbb D$ is a multiple zero of $F$, as desired.

Using the same method as above, we can show that the polynomial $$G(z):=1+z[p(z)-\mu_\infty]$$ also has a multiple zero $w_\infty$ in the unit disk, where
 $\mu_\infty$ is defined by
\begin{align*}
\mu_\infty&:=\frac{1}{w_\infty}+p(w_{\infty})=\frac{w_\infty^2p'(w_\infty)}{w_\infty}+p(w_{\infty})\\
&=w_\infty p'(w_\infty)+p(w_\infty).
\end{align*}

 Last we will  show that  if $w$ is a root of  $z^2p'(z)=\frac{n+2}{n+1}$ in $\mathbb D$ and
  $$\lambda=\frac{1}{w}+p(w),$$ then the equation $$1+z[p(z)-\lambda]=0$$ has at least two  roots (counting multiplicities) in $\mathbb D$  for all $n$ large enough.  To do this, we observe that  the polynomials  $z^2p'(z)-\frac{n+2}{n+1}$ and $z^2p'(z)-1$ have only one  different coefficient, and  the absolute value of the difference is given by $\frac{n+2}{n+1}-1$, which tends to $0$  as $n\rightarrow \infty.$
 As shown above, $z^2p'(z)-1$ has exactly two distinct zeros $z_\infty$ and $w_\infty$ in the unit disk, Lemma \ref{continuous dependence} guarantees that there is a positive integer $N_0$ such that $z^2p'(z)-\frac{n+2}{n+1}$ has two distinct zeros $z_n$ and $w_n$ in $\mathbb D$ for each $n\geqslant N_0$, and moreover,
 \begin{align}\label{zn}
\lim\limits_{n\rightarrow \infty} z_n=z_\infty \ \ \ \ \ \mathrm{and} \ \ \ \ \  \lim\limits_{n\rightarrow \infty} w_n=w_\infty.
\end{align}

Letting $$\lambda_n=\frac{1}{z_n}+p(z_n)$$
and
$$F_n(z)=1+z[p(z)-\lambda_n]$$
for each integer $n\geqslant N_0$. Recall that $$F(z)=1+z[p(z)-\lambda_\infty]$$
and $z_\infty$ is a  multiple zero of $F$ in the unit disk $\mathbb D$. We will use the zeros of $F$  to  study the distribution of zeros of each  polynomial $F_n$ for $n$ sufficiently large.

Observe that the only difference between $F_n$ and $F$ is the coefficient of the linear term. To be more precise, the absolute value of the difference is
\begin{align}\label{lambda difference}
|\lambda_n-\lambda_\infty|=\bigg|\Big[\frac{1}{z_n}+p(z_n)\Big]-\Big[\frac{1}{z_\infty}+p(z_\infty)\Big]\bigg|.
\end{align}
Applying Lemma \ref{continuous dependence} to $F_n$ and $F$,  there exists a small number $\tau>0$ (depending only on $k$) such that if $|\lambda_n-\lambda_\infty|<\tau$, then
 each polynomial $F_n$ has two zeros $\xi_n$ and $\widetilde{\xi}_n$ in $\mathbb D$ which satisfy that
\begin{equation}
\label{xi*}
  \left\{
   \begin{array}{c}
  \mathlarger{|\xi_n-z_\infty|<\frac{1}{4^k}},\vspace{2.68mm}\\
   \mathlarger{|\widetilde{\xi}_n-z_\infty|<\frac{1}{4^k}},\\
   \end{array}
  \right.
  \end{equation}
because $z_\infty\in \mathbb D$  is a multiple zero of $F$. Notice that $\xi_n$ does not necessarily equal $\widetilde{\xi}_n$. (If $\xi_n=\widetilde{\xi}_n$, then $\widetilde{\xi}_n$ is a  multiple root of $F_n$.)

To see that $|\lambda_n-\lambda_\infty|$ can be made as  small  as we need,  recall that we have shown
$$c_k\leqslant |z_\infty|<\frac{49}{50}$$
and $z_n\rightarrow z_\infty$ as $n\rightarrow \infty$. Thus there is a positive integer $N_1\geqslant N_0$ such that for each $n\geqslant N_1$, $z_n$ belongs to the compact subset $\big\{z: \frac{c_k}{2}\leqslant |z|\leqslant \frac{49}{50}\big\}$. Noting that
the function
$$Q(z)=\frac{1}{z}+p(z)$$
is uniformly continuous on $\big\{z: \frac{c_k}{2} \leqslant |z|\leqslant \frac{49}{50}\big\}$. For the constant $\tau>0$ chosen above,  there is a  positive integer $N_2\geqslant N_1$ such that if $n\geqslant N_2$, then  (\ref{zn}) and (\ref{lambda difference}) imply that
  $$|\lambda_n-\lambda_\infty|<\tau,$$
  as
  $$\lambda_n =\frac{1}{z_{n}}+p({z_{n}})=Q(z_n)$$
  and
 $$\lambda_\infty =\frac{1}{z_{\infty}}+p({z_{\infty}})=Q(z_\infty),$$
 as required.

 Therefore, we obtain the following  estimate for $|\xi_n|$:
 \begin{align*}
|\xi_n|&\leqslant |z_\infty|+|\xi_n-z_\infty|\\
&\leqslant |z_\infty|+\frac{1}{4^k}<\frac{49}{50}+\frac{1}{4^k}\\
&\leqslant \frac{49}{50}+\frac{1}{4^3}<1
\end{align*}
for every $n\geqslant N_2$, where the second inequality follows from (\ref{xi*}).  Moreover, we also have that
$$|\widetilde{\xi}_n|\leqslant |z_\infty|+|\widetilde{\xi}_n-z_\infty|<1$$
for $n\geqslant N_2$.
Thus the polynomial  $F_n(z)=1+z[p(z)-\lambda_n]$ has at least two  zeros $\xi_n$ and $\widetilde{\xi}_n$ in the open unit disk $\mathbb D$ for each integer $n\geqslant N_2$.

In the case that $$\mu_n=\frac{1}{w_n}+p(w_n),$$
similarly we can  use the above arguments to prove that $$1+z[p(z)-\mu_n]=0$$
has at least two roots $\eta_n$ and $\widetilde{\eta}_n$ in $\mathbb D$ for all $n$ large enough ($\eta_n$ does not necessarily equal $\widetilde{\eta}_n$). Therefore,  Lemma \ref{at least two zeros} is now proved.
\end{proof}

Now we are ready to prove the second  main theorem of this section.
\begin{proof}[\bf Proof of Theorem \ref{deg=k}] Recalling  that
 $$\Big(1-\frac{1}{\beta^{k+1}}\Big)^k=\Big(1+\frac{\alpha}{\beta}\Big)^k=-\frac{1}{2}$$
 and
 $$1+zp(z)=(z-\alpha)(z+\beta)^k,$$
 we immediately obtain
 $$\alpha^2p'(\alpha)=\frac{3}{2}.$$
Theorem \ref{deg>2} gives that $0$ is an isolated eigenvalue of the Toeplitz operator $T_{\overline{z}+p}$.

To complete the proof, we need to show that $\sigma(T_{\overline{z}+p})$ has at most finitely many isolated points. To this end, we denote $$h(z)=\overline{z}+p(z).$$
From the proof of Theorem \ref{(z)+az2+bz+c}, we have that the isolated points of $\sigma(T_{h})$ are contained in the subset
 \begin{align*}
  &\sigma_p(T_{h})\bigcap \Big\{\lambda \in \mathbb C: \lambda \notin \sigma_e(T_{h})\ \mathrm{and} \  \mathrm{index}(T_{h}-\lambda I)=0\Big\}\\
  &=\sigma_p(T_{h})\bigcap \bigg\{\lambda \in \mathbb C: \lambda \notin h(\partial \mathbb D)\ \mathrm{and} \ \mathrm{wind}\bigg(\frac{1+z[p(z)-\lambda]}{z}\bigg|_{\partial \mathbb D}, ~ 0\bigg)=0\bigg\}.
  \end{align*}
 For simplicity, we let
  $$\Lambda:=\sigma_p(T_{h})\bigcap \bigg\{\lambda \in \mathbb C: \lambda \notin h(\partial \mathbb D)\ \mathrm{and} \  \mathrm{wind}\bigg(\frac{1+z[p(z)-\lambda]}{z}\bigg|_{\partial \mathbb D}, ~0\bigg)=0\bigg\}.$$
Theorem \ref{key lemma} implies that $\Lambda$ is a subset of $\bigcup\limits_{n\geqslant 0} \Omega_n$, where for each nonnegative integer $n$,
$$\Omega_n:=\bigg\{\lambda: z_\lambda \ \text{is\ the \ root \ of}\   \ \lambda=\frac{1}{z}+p(z)\ \text{in}\  \mathbb D \ \ \mathrm{and}\ \  z_\lambda^2p'(z_\lambda)=\frac{n+2}{n+1} \mathrm{\ for\  some\ } n\in \mathbb N \bigg\}$$
is a finite set.
On the other hand,  Lemma \ref{at least two zeros} gives that $1+z[p(z)-\lambda]$ has at least two zeros in the unit disk for $\lambda \in \Omega_n$ if $n>N$. Hence, we deduce that
 $$\mathrm{wind}\bigg(\frac{1+z[p(z)-\lambda]}{z}\bigg|_{\partial \mathbb D}, ~0\bigg)\geqslant 1$$
 for every $\lambda\in \Omega_n$  with  $n>N$.
   It follows that $\Lambda$ is contained in  the finite union $\bigcup\limits_{n=0}^N \Omega_n$ of  finite sets
   $\Omega_n$, and so $\Lambda$ is a finite set. This completes the proof of Theorem \ref{deg=k}.
\end{proof}

\begin{rem}
Although Theorems \ref{deg>2} and \ref{deg=k} tell us that we can construct a polynomial $p$ with $\mathrm{deg}(p)=k$ such that $\sigma(T_{\overline{z}+p})$ has isolated points for every $k\geqslant 3$, there exists a class of high-degree polynomials $p$ such that the spectra of  Toeplitz operators $T_{\overline{z}+p}$ are all connected. More specifically, let us  consider $$\phi(z)=\overline{z}+(az^k+b)$$
with $k \geqslant 3$ and $a, b$ are complex constants. Indeed, Guan and the second author showed in \cite[Proposition 4.5]{GZ} that
$$\sigma(T_\varphi)=\varphi(\partial \mathbb D)\bigcup \Big\{\lambda\in \mathbb C: \lambda\notin \varphi(\partial \mathbb D)\ \mathrm{and}\  \mathrm{wind}\big(\varphi(\partial \mathbb D), \lambda\big)\neq 0\Big\},$$
which is a connected set in the complex plane $\mathbb C$.
\end{rem}

Let $h_k(z)=\overline{z}+p_k(z)$, where $p_k$ is the polynomial with degree $k\geqslant 3$  constructed in Theorem \ref{deg=k}. In view of Theorem  \ref{deg=k}, we end this section by discussing  the topological structure of the spectra of these Toeplitz operators $T_{h_k}$.

For $\phi\in C(\overline{\mathbb D})$, we denote its boundary function by $\phi^*$, i.e., $$\varphi^*:=\phi|_{\partial \mathbb D}.$$
Let $\mathbb T_{\phi^*}$ be the Hardy-Toeplitz operator with symbol $\phi^*$, then we always have $\sigma(\mathbb T_{\phi^*}) \subset \sigma(T_\phi)$, see \cite[Theorem 2.4]{ZZ} for the details. In addition, observe that the spectral structure of Hardy-Toeplitz operators with continuous symbols implies that $\sigma(T_{\phi})\backslash \sigma(\mathbb T_{\phi^*})$ is at most countable.
However, for the harmonic polynomial $h_k$, the following corollary tells us that the distinction between the spectra of the Bergman-Toeplitz operator $T_{h_k}$ and the corresponding Hardy-Toeplitz operator $\mathbb T_{h_k^*}$ is just finitely many isolated points.

\begin{cor}\label{relation}
For each integer $k\geqslant 3$, let $h_k$ be the harmonic polynomial mentioned above. Let $T_{h_k}$ and $\mathbb T_{h_k^*}$ be the Bergman-Toeplitz operator and the Hardy-Toeplitz operator, respectively. Then the relationship  between $\sigma(T_{h_k})$ and $\sigma(\mathbb T_{h_k^*})$ is given by
$$\sigma(T_{h_k})=\sigma(\mathbb T_{h_k^*})\cup \Lambda_k,$$
where $\Lambda_k$ is a finite subset of $\sigma_p(T_{h_k})$.
\end{cor}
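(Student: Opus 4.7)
The plan is to match the decomposition of $\sigma(T_{h_k})$ coming from Fredholm theory with the classical description of $\sigma(\mathbb{T}_{h_k^*})$, and then invoke the finiteness statement of Theorem \ref{deg=k} to control the exceptional set. First I would observe that since $h_k \in C(\overline{\mathbb{D}})$, the boundary function $h_k^*$ is continuous on $\partial \mathbb{D}$, so the Widom--Douglas theory for Hardy-Toeplitz operators with continuous symbol gives
$$\sigma(\mathbb{T}_{h_k^*}) = h_k^*(\partial \mathbb{D}) \cup \Big\{\lambda \notin h_k^*(\partial \mathbb{D}) : \mathrm{wind}\big(h_k^*(\partial \mathbb{D}), \lambda\big) \neq 0\Big\}.$$
This uses essentially that for Hardy-Toeplitz operators with continuous symbol, Fredholm plus index zero forces invertibility, so the only points of the spectrum outside the curve are those of nonzero winding.

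Next I would apply the general Fredholm decomposition established at the start of the proof of Theorem \ref{(z)+az2+bz+c}. That decomposition rests only on Lemma \ref{Fredholm index} and the Riesz-Fredholm splitting of the spectrum, not on the hypothesis $\deg p \leq 2$; thus for the harmonic polynomial $h_k$ we may write
$$\sigma(T_{h_k}) = h_k(\partial \mathbb{D}) \cup \Big\{\lambda \notin h_k(\partial \mathbb{D}) : \mathrm{wind}\big(h_k(\partial \mathbb{D}), \lambda\big) \neq 0\Big\} \cup \Lambda_k,$$
where
$$\Lambda_k := \sigma_p(T_{h_k}) \cap \Big\{\lambda \in \mathbb{C} : \lambda \notin h_k(\partial \mathbb{D}),\ \mathrm{wind}\big(h_k(\partial \mathbb{D}), \lambda\big) = 0\Big\}.$$
Since $h_k(\partial \mathbb{D})$ and $h_k^*(\partial \mathbb{D})$ are the same oriented curve in $\mathbb{C}$, the first two pieces coincide with $\sigma(\mathbb{T}_{h_k^*})$, yielding $\sigma(T_{h_k}) = \sigma(\mathbb{T}_{h_k^*}) \cup \Lambda_k$ with $\Lambda_k \subset \sigma_p(T_{h_k})$ built into the definition.

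Finally, to conclude that $\Lambda_k$ is finite I would simply quote the last paragraphs of the proof of Theorem \ref{deg=k}: by Theorem \ref{key lemma} every $\lambda \in \Lambda_k$ is determined by a root $z_\lambda \in \mathbb{D}$ of $z^2 p_k'(z) = (n+2)/(n+1)$ via $\lambda = 1/z_\lambda + p_k(z_\lambda)$ for some $n \in \mathbb{N}$, and Lemma \ref{at least two zeros} together with the argument principle forces $n \leq N$ for some absolute $N$; each of the finitely many resulting polynomial equations has at most $k+1$ roots in $\mathbb{D}$, so $\Lambda_k \subset \bigcup_{n=0}^{N}\Omega_n$ is finite. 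The only point requiring care is verifying that the Fredholm decomposition used for quadratic $p$ in Theorem \ref{(z)+az2+bz+c} applies verbatim here, which it does since that step does not rely on the degree of $p$; no new estimates are needed.
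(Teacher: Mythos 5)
Your proposal follows the same route as the paper: describe $\sigma(\mathbb{T}_{h_k^*})$ via the Widom--Douglas theorem for continuous symbols, invoke the Fredholm decomposition of $\sigma(T_{h_k})$ from the proof of Theorem \ref{(z)+az2+bz+c} (correctly observing that it is degree-independent), match the first two pieces with $\sigma(\mathbb{T}_{h_k^*})$, and then appeal to Theorem \ref{deg=k} and Lemma \ref{at least two zeros} for the finiteness of $\Lambda_k$. This is essentially the paper's argument, with your last paragraph merely unfolding the citation to Theorem \ref{deg=k} that the paper leaves compressed.
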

\begin{proof} Since $h_k^*$ is continuous on $\partial \mathbb D$,
it follows from \cite[Corollary 7.28]{Dou} or \cite[Theorem 4.6.1]{Arv} that
\begin{align*}
\sigma(\mathbb T_{h_k^*})&=h_k^*(\partial \mathbb D) \bigcup \Big\{\lambda \in \mathbb C: \lambda \notin h_k^*(\partial \mathbb D)\ \mathrm{and} \ \mathrm{wind}\big(h_k^*(\partial \mathbb D), \lambda\big)\neq 0\Big\}\\
&=h_k(\partial \mathbb D) \bigcup \Big\{\lambda\in \mathbb C: \lambda \notin  h_k(\partial \mathbb D)\ \mathrm{and}\  \mathrm{wind}\big(h_k(\partial \mathbb D), \lambda\big)\neq 0\Big\}.
\end{align*}
Thus, from the proof of Theorem \ref{(z)+az2+bz+c} we have that
\begin{align}\label{inclusion}
\sigma(\mathbb T_{h_k})\backslash \sigma(\mathbb T_{h_k^*})=\sigma_p (T_{h_k})\bigcap \Big\{\lambda \in \mathbb C: \lambda\notin \sigma_e(T_{h_k})\ \mathrm{and} \  \mathrm{index}(T_{h_k}-\lambda I)=0\Big\}.
\end{align}
Denote the intersection on the right-hand side of (\ref{inclusion}) by  $\Lambda_k$.

However, we have shown in  Theorem \ref{deg=k} that all the isolated points of $\sigma(T_{h_k})$ are contained in $\Lambda_k$  and  $\Lambda_k$  is a finite subset of $\sigma_p(T_{h_k})$.  Therefore,  we conclude that $\sigma(T_{h_k})$ is the union of $\sigma(\mathbb T_{h_k^*})$ and finitely many (isolated) eigenvalues of  $T_{h_{k}}$, to  finish the proof of Corollary \ref{relation}.
\end{proof}

\section{Weyl's theorem for a class of Toeplitz operators}

 In this section we will show that Weyl's theorem holds for a class of Toeplitz operators on the Bergman space. More precisely, letting $q$ be an arbitrary function in $H^\infty\cap C(\overline{\mathbb D})$, we will show that Weyl's theorem holds for the Bergman-Toeplitz operator $T_{\overline{z}+q}$. To do so, we begin with some standard notations related to the Weyl spectrum.

Suppose that $T$ is  a  bounded linear operator on some Hilbert space. The Weyl spectrum $\omega(T)$ of $T$ is defined by
$$\omega(T):= \bigcap_{K~\text{is~compact}}\sigma(T+K).$$
Using the characterization in \cite{Sc}, the Weyl spectrum of $T$ can be expressed as
$$\omega(T)=\sigma_e(T)\bigcup \Big\{\lambda\in \mathbb C: \lambda \notin \sigma_e(T)\ \mathrm{and}\  \mathrm{index}(T-\lambda I)\neq 0 \Big\}.$$
Following the notation in \cite{Ber, Ber2}, for simplicity  we use $\pi_{00}(T)$ to denote the set of isolated points $\lambda$ in the spectrum which are eigenvalues of finite geometric multiplicity, i.e.,
$$0<\mathrm{dim}~\mathrm{ker}(T-\lambda I)<\infty.$$
According to  \cite{Co}, we say that an operator $T$ satisfies
Weyl's theorem if
$$\omega(T)=\sigma(T)\backslash \pi_{00}(T).$$

For Bergman-Toeplitz operators with analytic and co-analytic symbols, it is clear that these operators satisfy Weyl's theorem, because their spectra are equal to the closure of the ranges of their symbols. Furthermore, Toeplitz operators with real-valued symbols and \emph{radial symbols} (i.e., $\varphi(z)=\varphi(|z|)$ for all $z\in \mathbb D$) also satisfy Weyl's theorem, since they are normal. However, unlike Toeplitz operators on the Hardy space, there exist many Bergman-Toeplitz operators for which Weyl's theorem does not hold.
\begin{example}
Let $$\varphi(z)=\chi_{\frac{1}{2}\mathbb D}(z)e^{-\mathrm{i}(\mathrm{arg}(z))},  \ \ \ z\in \mathbb D,$$
where $\frac{1}{2}\mathbb D=\big\{z\in \mathbb C: |z|<\frac{1}{2}\big\}$.  Considering the Toeplitz operator $T_{\varphi}$ on the Bergman space $L_a^2$, we have
\begin{align*}
T_{\varphi}e_n(z)=\begin{cases}
\mathlarger{0},  \ \ \  \ \ \ \  \ \ \ \ \ \ \ \ \ \ \ \ \ \ \ \ \ \ \ \ \ \ \ \ \ \ \  n=0,\vspace{3mm}\\
\mathlarger{\frac{\sqrt{n(n+1)}}{2n+1}\Big(\frac{1}{2}\Big)^{2n}e_{n-1}(z)}, \ \ \ \ n\geqslant 1,
\end{cases}
\end{align*}
where $\{e_{n}(z)\}_{n=0}^\infty=\big\{\sqrt{n+1}z^n\big\}_{n=0}^\infty$ is the orthonormal basis of $L_a^2$.
Then the Toeplitz operator $T_\varphi$ does not satisfy Weyl's theorem.
\end{example}
Indeed, noting that $T_{\phi}$ is a compact  backward weighted shift, since
$$\lim_{n\rightarrow \infty}\frac{\sqrt{n(n+1)}}{2n+1}\Big(\frac{1}{2}\Big)^{2n}=0.$$
Using $\mathrm{(a)}$ of \cite[Proposition 27.7]{Con2}, we conclude  that the spectrum  and essential spectrum  of $T_\varphi$ are both $\{0\}$, which implies that $\omega(T_\varphi)=\{0\}$. Moreover,  we observe that
$$\mathrm{ker}(T_\varphi)=\mathrm{span}\{1\}.$$
It follows that $0$ is an isolated eigenvalue with finite multiplicity and $\pi_{00}(T_\varphi)=\{0\}$. Thus we have
$$\sigma(T_\varphi)\backslash \pi_{00}(T_\varphi)=\varnothing$$
and
$$\omega(T_\varphi)\neq \sigma(T_\varphi)\backslash \pi_{00}(T_\varphi).$$
Therefore,  the Bergman-Toeplitz operator with the symbol $$\varphi(z)=\chi_{\frac{1}{2}\mathbb D}(z)e^{-\mathrm{i}(\mathrm{arg}(z))}$$  does not satisfy Weyl's theorem.

Nevertheless, in the rest of this section, we will use the characterizations for the point spectra of Toeplitz operators in Theorems \ref{key lemma} and \ref{deg=k} to  obtain  a class of Toeplitz operators with bounded harmonic symbols on the Bergman space for which Weyl's theorem holds.
\begin{thm}\label{weyl}
 Suppose that $q$ is in the disk algebra $H^\infty\cap C(\overline{\mathbb D})$ and  let $h(z)=\overline{z}+q(z)$. Then Weyl's theorem holds for the Bergman-Toeplitz operator $T_{h}$.
\end{thm}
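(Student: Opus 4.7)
The plan is to verify both inclusions in the definition of Weyl's theorem, namely $\omega(T_h)\subseteq \sigma(T_h)\setminus\pi_{00}(T_h)$ and $\sigma(T_h)\setminus\pi_{00}(T_h)\subseteq \omega(T_h)$. As a preliminary, Lemma \ref{Fredholm index} gives $\sigma_e(T_h)=h(\partial\mathbb D)$, which is connected as the continuous image of $\partial\mathbb D$; moreover $h|_{\partial\mathbb D}$ is nonconstant, since otherwise $q|_{\partial\mathbb D}$ would have to agree with $c-e^{-\mathrm{i}\theta}$ for some constant $c$, which has a nonzero negative Fourier coefficient and therefore cannot be the boundary function of a disk-algebra element $q$. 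Hence $\sigma_e(T_h)$ contains more than one point, so every one of its points is a limit point of $\sigma(T_h)$.

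For the first inclusion, suppose $\lambda\in\pi_{00}(T_h)$, so $\lambda$ is isolated in $\sigma(T_h)$ and an eigenvalue of finite geometric multiplicity. The connectedness of $\sigma_e(T_h)$ and the isolation of $\lambda$ force $\lambda\notin\sigma_e(T_h)$. Let $H$ be the component of $\mathbb C\setminus\sigma_e(T_h)$ containing $\lambda$; the Fredholm index is constant on $H$, and if it were nonzero then every point of $H$ would lie in $\sigma(T_h)$, contradicting that $\lambda$ is isolated in the open set $H$. So $\mathrm{index}(T_h-\lambda I)=0$ and $\lambda\notin\omega(T_h)$.

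For the second inclusion, let $\lambda\in\sigma(T_h)\setminus\omega(T_h)$. Then $T_h-\lambda I$ is Fredholm of index $0$ and not invertible, so $\lambda$ is an eigenvalue of finite geometric multiplicity. It remains to show that $\lambda$ is isolated in $\sigma(T_h)$. I would apply Theorem \ref{spectral picture theorem}: let $H$ be the component of $\mathbb C\setminus\sigma_e(T_h)$ containing $\lambda$, on which the index is identically $0$. If $H$ is the unbounded component, the final assertion of Pearcy's theorem directly gives that $H\cap\sigma(T_h)$ is a countable set of isolated eigenvalues of finite multiplicity, and we are done. If $H$ is a hole, the remaining task is to rule out the alternative $H\subseteq\sigma(T_h)$.

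To exclude that alternative, I use Theorem \ref{key lemma}: for any $\mu\in H$, Lemma \ref{Fredholm index} together with the argument principle forces $1+z[q(z)-\mu]$ to have exactly one zero $z_\mu\in\mathbb D$, which is automatically simple; and if $\mu$ is an eigenvalue, Theorem \ref{key lemma} further requires $z_\mu^2q'(z_\mu)=(n+2)/(n+1)$ for some $n\in\mathbb N$. For each fixed $n$, the set $\{z\in\mathbb D: z^2q'(z)=(n+2)/(n+1)\}$ is the zero set of a nontrivial holomorphic function on $\mathbb D$ and is therefore countable; taking the union over $n\in\mathbb N$ and pushing forward by the map $z\mapsto 1/z+q(z)$ shows that the eigenvalues of $T_h$ in $H$ form a countable set, whereas $H$ itself is uncountable. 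This rules out $H\subseteq\sigma(T_h)$, and Pearcy's theorem then places us in case (c): $\lambda$ is an isolated eigenvalue of finite multiplicity, so $\lambda\in\pi_{00}(T_h)$. The main obstacle is exactly this countability step, which rests on converting the spectral condition into the algebraic constraint of Theorem \ref{key lemma}.
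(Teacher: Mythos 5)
Your proof is correct and follows essentially the same route as the paper: both decompose $\sigma(T_h)$ using Lemma \ref{Fredholm index}, use Theorem \ref{key lemma} to bound the eigenvalues off $\sigma_e(T_h)$ by a countable set, and invoke Pearcy's spectral picture theorem to rule out filled-in holes and thereby identify $\pi_{00}(T_h)$ with the set of index-zero eigenvalues. Your explicit check that $h|_{\partial\mathbb D}$ is nonconstant (and hence $\sigma_e(T_h)$ has no isolated points) is a detail the paper leaves implicit, but it does not change the approach.
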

\begin{proof}
Recall that the spectrum of the Toeplitz operator $T_h$ can be decomposed as the following disjoint union:
\begin{align*}
 \sigma(T_h)&=h(\partial \mathbb D)\bigcup \Big\{\lambda\in \mathbb C: \lambda \notin h(\partial \mathbb D)\ \mathrm{and}\  \mathrm{wind}\big(h(\partial \mathbb D), \lambda\big)\neq 0\Big\}\bigcup \Lambda,
 \end{align*}
where $$\Lambda:=\sigma_p (T_h)\bigcap \Big\{\lambda \in \mathbb C: \lambda \notin \sigma_e(T_h)\ \mathrm{and}\ \mathrm{index}(T_{h}-\lambda I)=0\Big\}.$$
By the definition of $\omega(T_h)$ and Lemma \ref{Fredholm index}, we have
$$\omega(T_h)=h(\partial \mathbb D)\bigcup \Big\{\lambda \in \mathbb C: \lambda \notin h(\partial \mathbb D)\ \mathrm{and} \  \mathrm{wind}\big(h(\partial \mathbb D), \lambda\big)\neq 0\Big\},$$
to obtain
\begin{align}\label{pi}
\sigma(T_h)=\omega(T_h)\cup \Lambda.
\end{align}
Since all the isolated points of $\sigma(T_h)$ are contained in $\Lambda$, we immediately have $\pi_{00}(T_h)\subset \Lambda$.

On the other hand, from the proofs of Theorems \ref{key lemma} and  \ref{deg>2}  we recall that $\Lambda$ is a subset of the following countable set:
$$ \bigg\{\lambda \in \mathbb C: z_\lambda \ \text{is\ the \ root \ of}\   \ \lambda=\frac{1}{z}+q(z)\ \text{in}\  \mathbb D \ \ \mathrm{and}\ \  z_\lambda^2q'(z_\lambda)=\frac{n+2}{n+1} \mathrm{\ for\  some\ } n\in \mathbb N \bigg\}.$$
It follows from Theorem \ref{spectral picture theorem} that each eigenvalue in $\Lambda$ is an isolated point of $\sigma(T_h)$. Thus we have by the definition of $\pi_{00}(T_h)$ that $\Lambda\subset \pi_{00}(T_h)$,
hence  we get
$$\Lambda=\pi_{00}(T_h).$$

To show that $T_h$ satisfies  Weyl's theorem,  we need to consider two cases. If $T_h$ has no isolated eigenvalues with finite geometric multiplicity, then we have
$\Lambda=\pi_{00}(T_h)=\varnothing$, it follows from (\ref{pi}) that
$$\omega(T_h)=\sigma(T_h).$$
For the case of $\pi_{00}(T_h)\neq \varnothing$, we actually have
$$\omega(T_h)=\sigma(T_h)\backslash \Lambda=\sigma(T_h)\backslash \pi_{00}(T_h),$$
as desired. This completes the proof of Theorem \ref{weyl}.
\end{proof}

As every  hyponormal  operator satisfies Weyl's theorem,   the following example shows that there are  a lot  of  non-hyponormal  Bergman-Toeplitz operators with harmonic symbols for which Weyl's theorem holds.

 \begin{example}
 In order to construct a non-hyponormal Toeplitz operator with harmonic polynomial symbol on the Bergman space, we first choose complex numbers $a_1, a_2, \cdots, a_n$ $(n\geqslant 2)$ such that $a_n\neq 0$ and
 \begin{align}\label{hyponormal}
 \big|a_1+2a_2+\cdots+na_n\big|<1.
 \end{align}
Letting
$$q(z)=a_1z+a_2z^2+\cdots+a_nz^n$$
and  $h$ be the harmonic polynomial
$h(z)=\overline{z}+q(z).$ Then the Toeplitz operator $T_h$ satisfies  Weyl's theorem but it is not hyponormal on $L_a^2$.
\end{example}
 In order to show that $T_h$ defined above is not hyponormal, we shall recall a necessary condition for Toeplitz operators to be hyponormal on the Bergman space.  Let $f$ and $g$ be analytic on the closed unit disk $\overline{\mathbb D}$. It was shown in \cite{AC} and \cite{Sa} that
$$|f'(z)|\geqslant |g'(z)|$$
for all $z\in \partial \mathbb D$ if the Toeplitz operator $T_{f+\overline{g}}$ is hyponormal on $L_a^2$. But  Condition $(\ref{hyponormal})$ tells us that  $|q'(1)|<1$. Thus  $T_{h}$ is not a  hyponormal operator. On the other hand,  Theorem \ref{weyl} implies that Weyl's theorem holds for the Bergman-Toeplitz operator $T_{h}$. So we obtain a class of non-hyponormal Toeplitz operators $T_{h}$ on the Bergman space for which Weyl's theorem holds.
\vspace{3mm}
\subsection*{Acknowledgment}
We would like to thank the referee for the constructive and valuable comments and suggestions that improved the
content of this paper. This work was partially supported by  NSFC (Grant Nos.: 11531003,
11701052, 11871157). The first author was  supported by  NNSF of China (12231005) and NSF of Shanghai (21ZR1404200). The second author was  supported by the Fundamental Research Funds for the Central Universities (Grant Nos.: 2020CDJQY-A039, 2020CDJ-LHSS-003).
\vspace{5mm}


\begin{thebibliography}{99}

{\footnotesize

\bibitem{AC}P. Ahern and \v{Z}. \v{C}u\v{c}kovi\'{c}, A mean value inequality with applications to Bergman
space operators, \textsl{Pacific J. Math.}, 1996, 173(2): 295-305.

\bibitem{Arv} W. Arveson, \textsl{A Short Course on Spectral Theory}, Springer, New York,  2000.

\bibitem{Ax} S. Axler, Bergman spaces and their operators, Surveys of some recent results in operator theory, vol. I, \textsl{Pitman Res. Notes Math. Ser}., vol. 171, Longman Sci. Tech., Harlow, 1988, 1-50.


\bibitem{AxZ} S. Axler and D. Zheng,  Compact operators via the Berezin transform, \textsl{Indiana Univ. Math. J.},  1998, 47(2): 387-400.

\bibitem{Bay} F. Bayart and  \'{E}. Matheron, \textsl{Dynamics of Linear Operators}, Cambridge University Press, Cambridge, 2009.

\bibitem{Ber} S. K. Berberian, An extension of Weyl's theorem to a class of not necessarily normal operators, \textsl{Michigan Math. J.}, 1969, 16(3): 273-279.

\bibitem{Ber2} S. K. Berberian and P. Halmos, The Weyl spectrum of an operator, \textsl{Indiana Univ. Math. J.}, 1970, 20(6): 529-544.


\bibitem{Co}  L. A. Coburn,  Weyl's theorem for nonnormal operators, \textsl{Michigan Math. J.},  1966, 13(3): 285-288.


\bibitem{Con}  J. B. Conway, \textsl{A Course in Functional Analysis}, second edition, Graduate Texts in Mathematics, vol. 96, Springer-Verlag, New York, 1990.

\bibitem{Con2}J. B. Conway, \textsl{A Course in Operator Theory}, American Mathematical Society, Providence, R.I., 2000.




\bibitem{Dou} R. Douglas,  \textsl{Banach Algebra Techniques in Operator Theory}, second edition, Graduate Texts in Mathematics, vol. 179, Springer, New York, 1998.


\bibitem{Dur} P. L. Duren,  \textsl{Theory of $H^p$ Spaces}, Academic Press, New York,  2000.




\bibitem{GZ} N. Guan and X. Zhao, Invertibility of Bergman-Toeplitz operators with harmonic polynomial symbols, \textsl{Sci. China Math.}, 2020, 63(5): 965-978.




\bibitem{Nik} V. P. Havin and N. K. Nikolski (Eds.), \textsl{Linear and Complex Analysis Problem Book 3 Part I}, Lecture Notes in Mathematics 1573, Springer-Verlag, 1994.


\bibitem{McS} G. McDonald and C. Sundberg, Toeplitz operators on the disc, \textsl{Indiana Univ. Math. J.},  1979,  28(4): 595-611.


\bibitem{Ob}K. K. Oberai, On the Weyl spectrum, \textsl{Illinois J. Math.}, 1974, 18(2): 208-212.


\bibitem{Ost} A. N. Ostrowski,  \textsl{Solutions of Equations in Euclidean and Banach Spaces}, third edition, Academic Press, New York, 1973.


\bibitem {Per} C. M. Pearcy,   Some Recent Developments in Operator Theory, \textsl{Regional Conference Series in Mathematics}, vol. 36, American Mathematical Society, Providence, R.I., 1978.

\bibitem {RS} Q. I. Rahman and G. Schmeisser, \textsl{Analytic Theory of Polynomials}, London Math. Soc. Monographs (N.S.) 26, Oxford University Press, New York, 2002.

\bibitem {Sa} H. Sadraoui, Hyponormality of Toeplitz operators and composition operators, PhD thesis, Purdue University, 1992.

\bibitem {Sc} M. Schechter, Invariance of the essential spectrum, \textsl{Bull. Amer. Math. Soc.}, 1965, 71(2): 365-367.


\bibitem{Str} K. Stroethoff  and  D. Zheng,  Toeplitz and Hankel operators on Bergman spaces, \textsl{Tran. Amer. Math. Soc.},
     1992, 329(2): 773-794.


\bibitem{Str1} K. Stroethoff, The Berezin transform and operators on spaces of analytic functions, Banach Center Publ., vol. 38, Polish Academy of Sciences, Warsaw (1997), 361-380.


 \bibitem{Sua} D. Su\'{a}rez, The essential norm of operators in the Toeplitz algebra on $A^p(\mathbb B_n)$,  \textsl{Indiana Univ. Math. J.}, 2007, 56(5): 2185-2232.


\bibitem{SZ} C. Sundberg and D. Zheng, The spectrum and essential spectrum of Toeplitz operators with harmonic symbols,  \textsl{Indiana Univ. Math. J.},  2010, 59(1):  385-394.



\bibitem{Widom1}H. Widom, On the spectrum of a Toeplitz operator, \textsl{Pacific J. Math.},  1964, 14(1): 365-375.


\bibitem{Widom2} H. Widom, Toeplitz operators on $H^p$, \textsl{Pacific J. Math.},  1966, 19(3): 573-582.


\bibitem{ZZ} X. Zhao and D. Zheng, The spectrum of Bergman-Toeplitz operators with some harmonic symbols, \textsl{Sci. China Math.}, 2016, 59(4): 731-740.

\bibitem{Zhu1}K. Zhu, Positive Toeplitz operators on weighted Bergman spaces of bounded symmetric domains, \textsl{J. Operator Theory}, 1988, 20(3): 329-357.

\bibitem{Zhu} K. Zhu,  \textsl{Operator Theory in Function Spaces,} Marcel Dekker, New York, 1990.}
\end{thebibliography}
\end{document}